\def\thm@space@setup{%
  \thm@preskip=\parskip \thm@postskip=0pt
}
\declaretheorem[parent=section]{lemma}
\declaretheorem[sibling=lemma]{theorem}
\declaretheorem[sibling=lemma, name=Proposition]{prop}
\declaretheorem[sibling=lemma]{corollary}
\declaretheorem[sibling=lemma]{problem}
\newcommand{\boundary}{\partial}
\newcommand{\set}[1]{\left\{#1\right\}}
\newcommand{\closure}[1]{\overline{#1}}
\newcommand{\scc}{simple closed curve\xspace}
\newcommand{\abs}[1]{\left|#1\right|}
\newcommand{\defn}[1]{\emph{#1}}
\newcommand{\pd}{\partial}
\newcommand{\A}{\alpha}
\newcommand{\n}{\beta}
\renewcommand{\L}{\mathcal{L}}
\newcommand{\Rhat}{\widehat{R}}
\begin{document}
\title{Bridge spectra of twisted torus knots}
\author{R. Sean Bowman, Scott Taylor, and Alexander Zupan\thanks{The third author is supported by the National Science Foundation under Award No. DMS-1203988.}}

\date{}

\maketitle

\begin{abstract}
  We compute the genus zero bridge numbers and give lower bounds on
  the genus one bridge numbers for a large class of sufficiently
  generic hyperbolic twisted torus knots.  As a result, the bridge
  spectra of these knots have two gaps which can be chosen to be
  arbitrarily large, providing the first known examples of hyperbolic
  knots exhibiting this property.  In addition, we show that there are
  Berge and Dean knots with arbitrarily large genus one bridge
  numbers, and as a result, we give solutions to problems of
  Eudave-Mu\~noz concerning tunnel number one knots.
\end{abstract}

\section{Introduction}
The \defn{bridge spectrum} \cite{Zupan13} of a knot records the
minimum bridge numbers of a knot with respect to Heegaard surfaces of
all possible genera in a 3-manifold. In particular for a knot $K
\subset S^3$, the bridge spectrum is the sequence

\[ \mathbf{b}(K) = (b_0(K),b_1(K),b_2(K),\dots),\] 

where $b_g(K)$ is the minimal bridge number of $K$ with respect to a
genus $g$ Heegaard surface in $S^3$. The process of meridional
stabilization (see below) shows that, for all $g$ such that $b_g(K) > 0$,

\[ b_{g+1}(K) \leq b_{g}(K) - 1 \]

It is natural to consider the \defn{gaps} in the bridge spectrum: that
is, the values of $g$ for which $b_g(K) - b_{g+1}(K)$ (the
\defn{order} of the gap) is greater than one. Zupan \cite{Zupan13}
showed that for each $n$ there is an infinite family of iterated
torus knots with exactly $n$ gaps of arbitrarily large order. Those
examples are, of course, non-hyperbolic. Rieck and Zupan
\cite{JohnsonLDT,Zupan13} have asked if there are \emph{hyperbolic} knots
having more than one gap in their bridge spectrum. We answer the
question affirmatively by exhibiting a class of twisted torus knots
with two gaps of arbitrarily large order. We show:

\begin{theorem}\label{thm:hyperbolic-gaps}
  Given $C>0$, there are integers $p$, $q$, and $r$ with
 and $1<r<p<q$ such that whenever $|s| > 18p$, the twisted torus knot $K=T(p,q,r,s)$ is hyperbolic
  and satisfies
  \[ b_0(K) = p \]
  and
  \[ C\leq b_1(K)\leq\frac{1}{2}p. \]
  Moreover, these knots may be chosen to have tunnel number one.
\end{theorem}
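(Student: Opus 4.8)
The plan is to choose the parameters, then verify hyperbolicity and the four numerical claims. Given $C$, take $p$ to be a large integer (at least $2C$, and large enough for the quantitative estimates below), pick $r$ with $1<r<p$, and pick $q$ large and coprime to $p$; when the tunnel-number-one conclusion is wanted we further restrict $(p,q,r)$ to the sub-family of Dean (primitive/Seifert) twisted torus knots, which carry an explicit unknotting tunnel. A convenient preliminary remark is that \emph{every} twisted torus knot lies on a genus-$2$ Heegaard surface of $S^3$: starting from the Heegaard torus $T$ carrying $T(p,q)$, attach a $1$-handle to the solid torus bounded by $T$ along a band crossing the twist region and reroute the $r$ twisted strands through it; then $K$ sits on the boundary of a genus-$2$ handlebody whose complement is again a genus-$2$ handlebody, so $b_2(K)=0$. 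Hence $\mathbf{b}(K)=(b_0(K),b_1(K),0,0,\dots)$, and once $b_0(K)=p$ and $C\le b_1(K)\le\frac{1}{2}p$ are established the bridge spectrum has exactly two gaps, of orders $b_0(K)-b_1(K)\ge\frac{1}{2}p\ge C$ at $g=0$ and $b_1(K)\ge C$ at $g=1$ — the conclusion advertised in the abstract. It therefore remains to prove hyperbolicity and the four inequalities on $b_0(K)$ and $b_1(K)$.

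Hyperbolicity comes from Thurston's hyperbolic Dehn surgery theorem. Realize $K=T(p,q,r,s)$ as the result of $-1/s$-surgery on an unknot $c$ bounding a disk that meets $T(p,q)$ in $r$ coherently oriented points, so that $S^3\setminus N(K)$ is the $-1/s$-filling of the $c$-cusp of the link exterior $M=S^3\setminus N(T(p,q)\cup c)$. For $(p,q,r)$ in general position $M$ is hyperbolic, and an estimate on the shape of a maximal cusp of the $c$-component shows that the filling slope $-1/s$ has length at least about $|s|/(3p)$, which exceeds $6$ precisely when $|s|>18p$; the $6$-theorem of Agol and Lackenby then gives that $S^3\setminus N(K)$ is hyperbolic. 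This is where the hypothesis $|s|>18p$ is used for hyperbolicity; it is also comfortably more than what the bridge-number estimates below require.

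For the genus-zero bridge number, the upper bound $b_0(K)\le p$ is immediate: $K$ is the closure of the $p$-strand braid $(\sigma_1\cdots\sigma_{p-1})^{q}(\sigma_1\cdots\sigma_{r-1})^{rs}$, whose braid index is at most $p$, and bridge number is at most braid index. The reverse inequality $b_0(K)\ge p$ is the technical heart of the paper. Since $r<p<q$, the torus knot $T(p,q)$ has bridge number exactly $p$ (Schubert), and the point is that, for $|s|$ large, the $s$-fold twist along $c$ cannot be absorbed by a bridge sphere with fewer than $p$ bridges. The natural machinery here is the thin-position and curve-complex-distance theory for bridge surfaces (Tomova, Scharlemann--Tomova, and the high-distance-via-twisting constructions that followed): one manufactures from the twisting a bridge surface for $K$ that is at least as complex as a $p$-bridge sphere and whose distance grows linearly in $|s|$, and then Tomova's comparison theorem forbids any competing bridge surface of strictly smaller complexity once $|s|>18p$. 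Together with the braid bound this yields $b_0(K)=p$.

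For the genus-one bridge number, the upper bound is a direct construction: isotope $K$ so that its $T(p,q)$ part lies on the Heegaard torus $T$ and the $r$ twisted strands are pushed just off $T$; an $r$-strand braid can be isotoped into plat position with $\ceil{r/2}$ maxima and $\ceil{r/2}$ minima relative to $T$, so $K$ is in $\ceil{r/2}$-bridge position with respect to $T$ and $b_1(K)\le\ceil{r/2}\le\frac{1}{2}p$ because $r<p$. For the lower bound $C\le b_1(K)$, the same distance input is used: a genus-$1$ bridge surface for $K$ with $k$ bridges has $-\chi$ equal to $2k$, and Tomova's inequality forces $2k$ to exceed a fixed linear function of the distance of the associated genus-$2$ surface, so that once $|s|>18p$ this distance is large enough to force $k$ to be at least a definite fraction of $p$; since $p$ (and with it the threshold $18p$ for $|s|$) was chosen large relative to $C$, this gives $k\ge C$ for every admissible $s$. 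The main obstacle in the whole argument is exactly this pair of lower bounds in quantitative form — establishing that twisting $T(p,q)$ along $c$ really does preserve the bridge number $p$ exactly and does inflate the complexity of every low-genus bridge surface, and tracking the constants so that the single hypothesis $|s|>18p$ handles hyperbolicity and both lower bounds at once. The remaining pieces — the braid and plat constructions, the meridional-stabilization inequality, and the identification of the Dean sub-family with its unknotting tunnel — are comparatively routine.
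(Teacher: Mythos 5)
The central gap is in your mechanism for the lower bound $C\le b_1(K)$. In the paper this bound has nothing to do with $s$: the knots are produced by twisting $\alpha$ along $\beta$ on a genus two Heegaard surface $n$ times, and the Baker--Gordon--Luecke theorem (\autoref*{BGL-maintheorem}), fed by the catching surface of \autoref*{catching-sfce} and the annulus facts of \autoref*{annulus-not-isotopic} and \autoref*{no-essential-annulus}, gives $b_1(K^n)\ge \frac{1}{2}\bigl(\frac{n}{-36\chi(Q)}-1\bigr)$ (\autoref*{lower-bound}), a bound that grows with the twisting parameter $n$ and is independent of $s$; the hypothesis $|s|>18p$ is never used for $b_1$. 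You instead try to extract $b_1\ge C$ from $|s|$ being large, via an asserted but unproven claim that the distance of some bridge surface grows linearly in $|s|$ together with Tomova's inequality. No such quantitative statement is established, and worse, your parameter selection makes the conclusion false as stated: you allow any $r$ with $1<r<p$, but $b_1(T(p,q,r,s))\le\min\{r,p-r\}$ (\autoref*{b_1-upper-bound}), so taking $r=2$ gives $b_1\le 2$ for every $s$, no matter how large $p$ and $|s|$ are. The largeness of $b_1$ is driven by $r$ growing (in the paper $r=n\Delta e$), and your proof never secures this; in particular the claim that large $|s|$ ``forces $k$ to be a definite fraction of $p$'' cannot be right independently of $r$.

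There are secondary gaps as well. Your $b_0(K)\ge p$ step is only a strategy outline; the paper's proof (\autoref*{yoav}) does not use distance or Tomova at all, but rather takes a minimal bridge sphere, applies Hayashi--Shimokawa (\autoref*{weakly-incompressible}) to obtain a strongly irreducible sphere, intersects it with the essential four-punctured sphere coming from the cabling annulus of $T(p,q)$ punctured by the twisting circle (\autoref*{four-punctured-sphere}), and counts arcs via \autoref*{arc-annulus-bound} to conclude $|s|\le 18p$ unless the sphere is cancellable --- this arc count is the actual source of the constant $18p$, which you instead attribute to hyperbolicity. For hyperbolicity the paper simply cites Lee ($K\cup C$ hyperbolic by \autoref*{hyperbolic-twists}, and $K_s$ hyperbolic already for $|s|>5$), whereas your $6$-theorem route rests on an invented cusp-length estimate ($\ge|s|/(3p)$) with no justification. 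Finally, a minor point: with the paper's conventions a knot sitting on the genus two splitting has $b_2=1$, not $0$, though this does not affect the theorem itself.
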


Consequently, such knots have two gaps, of arbitrarily large order, in
their bridge spectra:
\begin{corollary}
  Given $C > 0$, there are hyperbolic knots in $K\subset S^3$ with
  \[ b_0(K)-b_1(K) \geq b_1(K) - b_2(K) \geq C. \] 
\end{corollary}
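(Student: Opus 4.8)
The plan is to deduce the Corollary from Theorem~\ref{thm:hyperbolic-gaps} essentially formally, using only one extra, standard fact: every tunnel number one knot $K\subset S^3$ has $b_2(K)\le 1$. I would justify this as follows. An unknotting tunnel $\tau$ for $K$ gives a genus two Heegaard splitting $S^3 = V\cup_\Sigma W$ in which $V = N(K\cup\tau)$ is the genus two handlebody obtained from the solid torus $N(K)$ by attaching the $1$-handle $N(\tau)$. Inside $V$, the knot $K$ is the core of the solid torus $N(K)$, hence is isotopic to a longitude of $\partial N(K)$; pushing that longitude off the feet of $N(\tau)$ puts it on $\Sigma$. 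So $K$ is isotopic into the genus two Heegaard surface $\Sigma$, whence it is at most $1$-bridge with respect to $\Sigma$ and $b_2(K)\le 1$.

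The remaining steps are bookkeeping. Given $C>0$, first apply Theorem~\ref{thm:hyperbolic-gaps} with the constant $C+1$ in place of $C$: this produces integers $1<r<p<q$ so that for every $s$ with $|s|>18p$ the knot $K = T(p,q,r,s)$ is hyperbolic and has tunnel number one, with $b_0(K) = p$ and $C+1\le b_1(K)\le\tfrac12 p$. Second, by the fact above, $b_2(K)\le 1$. Third, combine: from $b_1(K)\ge C+1$ and $b_2(K)\le 1$ we get $b_1(K)-b_2(K)\ge C$, while from $b_1(K)\le\tfrac12 p$ we get
\[ b_0(K)-b_1(K) = p-b_1(K) \ge \tfrac12 p \ge b_1(K) \ge b_1(K)-b_2(K). \]
Chaining these yields $b_0(K)-b_1(K)\ge b_1(K)-b_2(K)\ge C$, which is the desired conclusion.

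I do not expect any real obstacle in this argument; all of the genuine work lives in Theorem~\ref{thm:hyperbolic-gaps} (computing $b_0(K)$ and bounding $b_1(K)$ from below for these twisted torus knots), and the only input not provided there is the elementary bound $b_2(K)\le 1$ for tunnel number one knots. The one point requiring a little care is to feed a large enough constant into the theorem — here $C+1$ rather than $C$ — so that after subtracting the (possibly nonzero) $b_2$ term a gap of size at least $C$ still remains.
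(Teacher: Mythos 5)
Your proposal is correct and follows essentially the same route as the paper's (implicit) deduction of the corollary from~\autoref{thm:hyperbolic-gaps}: the only cosmetic difference is that the paper obtains $b_2(K)\le 1$ for free because every twisted torus knot $T(p,q,r,s)$ lies on a genus two Heegaard surface by construction (as noted at the start of~\autoref{section:construction}), whereas you derive it from the tunnel number one clause via an unknotting tunnel; both justifications are valid. The bookkeeping—feeding $C+1$ into the theorem and chaining $b_0(K)-b_1(K)\ge \tfrac12 p\ge b_1(K)\ge b_1(K)-b_2(K)\ge C$—is exactly what is needed.
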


Twisted torus knots are an interesting class of knots which have been
studied in several contexts.  These knots represent many
different knot types.  Morimoto and Yamada~\cite{MorimotoYamada09} and
Lee~\cite{Lee12} have constructed twisted torus knots which are
cables.  Morimoto has also shown that infinitely many twisted torus
knots are composite~\cite{Morimoto12}.  Guntel has shown that
infinitely many twisted torus knots are torus knots~\cite{Guntel12}.
Lee~\cite{Lee13} has characterized twisted torus knots which are
actually the unknot. Moriah and Sedgwick have shown that certain
hyperbolic twisted torus knots have minimal genus Heegaard splittings
which are unique up to isotopy~\cite{MoriahSedgwick09}.  It is also
known that twisted torus knots have arbitrarily large
volume~\cite{FuterEtAl11}. Little, however, has been proved about
the bridge numbers of twisted torus knots apart from Morimoto, Sakuma,
and Yokota's result that a certain infinite family is not one bridge
with respect to a genus one splitting of the
$3$--sphere~\cite{MorimotoSakumaYokota96}.

Twisted torus knots are also interesting from the point of view of
Dehn surgery.  Berge constructed examples of knots in $S^3$ with lens
space surgeries, infinitely many of which are twisted torus
knots~\cite{Berge90}.  Later, Dean constructed twisted torus knots with
small Seifert fibered surgeries~\cite{Dean03}.  

The knots constructed by Berge are knots lying on a genus two
splitting of $S^3$ which represent a primitive element in the
fundamental group of the handlebody on either side.  Such knots are
called doubly primitive, and it is not difficult to see that surgery
along the slope determined by the splitting yields a lens space.  An
open question is whether the list Berge gives in~\cite{Berge90} is a
complete list of all knots in $S^3$ with lens space surgeries (problem
1.78 in Kirby's list~\cite{Kirby10}).  Many of the knots in Berge's list
have bounded genus one bridge number.  However, we show the following
theorem:

\begin{theorem}\label{thm:hyperbolic-berge}
  There are hyperbolic Berge knots with arbitrarily large genus one
  bridge number.  These are Berge knots of type VII and VIII, knots
  which lie in the fiber of a genus one fibered knot in $S^3$.
\end{theorem}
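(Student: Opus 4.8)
The plan is to realize a subfamily of the twisted torus knots appearing in Theorem~\ref{thm:hyperbolic-gaps} as Berge knots of types VII and VIII, and then simply read off the conclusion from the genus one bridge number lower bound $b_1(K) \geq C$ established there. Since the only genus one fibered knots in $S^3$ are the two trefoils and the figure-eight knot, a Berge knot of type VII or VIII is an essential \scc $\gamma$ lying on the once-punctured torus fiber $F$ of one of these three knots; up to powers of the monodromy, $\gamma$ is determined by a primitive class in $H_1(F)$, i.e.\ a slope. Tubing $F$ to itself along an unknotting tunnel of the fibered knot produces a genus two Heegaard surface $\Sigma$ of $S^3$ that contains $\gamma$, and $\gamma$ represents a primitive element of $\pi_1$ of each handlebody bounded by $\Sigma$ — this is precisely the doubly-primitive condition, so $\gamma$ does determine a Berge knot.

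The technical core is to identify these curves with twisted torus knots $T(p,q,r,s)$. I would start from the standard genus two Heegaard surface carrying $T(p,q)$ as a curve winding $p$ times longitudinally and $q$ times meridionally, and recognize a subsurface of it as a neighborhood of a fiber $F$ of the trefoil or figure-eight. Adding $s$ full twists along a family of $r$ parallel strands that cobound a meridian disk of one handlebody is realized by a homeomorphism of that handlebody, so $T(p,q,r,s)$ remains a curve on this genus two surface for \emph{every} $s$; the task is to choose the triple $(p,q,r)$, subject to a congruence/divisibility relation, so that the resulting curve is isotopic into the fiber subsurface $F$ and is primitive there. Here I expect to invoke the known explicit parametrizations of type VII and VIII Berge knots as twisted torus knots (as in the work of Dean on primitive/primitive knots and Baker's surgery descriptions of Berge knots), reducing this step to matching parameters and checking that the doubly-primitive structure survives the $s$-twisting. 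The crucial output is that $(p,q,r)$ can be fixed while $s \in \Z$ remains completely free.

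With such a triple $(p,q,r)$ in hand — chosen so that $1<r<p<q$, the type VII/VIII condition holds, and the genericity hypotheses used in the proof of Theorem~\ref{thm:hyperbolic-gaps} are satisfied (one has room to take $q$ large relative to $p$, so the two sets of constraints are compatible) — Theorem~\ref{thm:hyperbolic-gaps} gives that for every $s$ with $|s|>18p$ the knot $K=T(p,q,r,s)$ is hyperbolic with $b_1(K)\geq C$, while the construction of the previous paragraph shows $K$ is a Berge knot of type VII or VIII lying in the fiber of a genus one fibered knot. As $C$ was arbitrary, this proves the theorem. The main obstacle is the identification in the second paragraph: verifying that the curve on the genus two surface can genuinely be isotoped onto the genus one fiber and is primitive there, which amounts to reconciling the twisted torus knot description with Berge's fiber-surface description; once this is done, the hyperbolicity and the unbounded $b_1$ come for free from Theorem~\ref{thm:hyperbolic-gaps}.
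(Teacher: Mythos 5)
There is a genuine gap, and it sits exactly where you placed your ``crucial output'': the claim that $(p,q,r)$ can be fixed with the Berge/type VII--VIII structure in place while $s\in\Z$ ``remains completely free,'' so that \autoref*{thm:hyperbolic-gaps} (which needs $|s|>18p$) can be quoted for hyperbolicity and the lower bound on $b_1$. This is incompatible with the doubly primitive condition. A Berge knot must be primitive in \emph{both} handlebodies of the genus two splitting, and Lemma~\ref{ttk-primitive} shows that primitivity of $T(p,q,r,s)$ in $H_2$ forces $s=\pm 1$ (twisting by $s$ is a homeomorphism of $H_1$ only, so it preserves primitivity on that side but destroys it on the other once $|s|\geq 2$). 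So the doubly primitive structure does \emph{not} survive the $s$-twisting, and the knots to which \autoref*{thm:hyperbolic-gaps} applies (with $|s|>18p$) are never the Berge knots you need; conversely, for the Berge knots ($s=\pm 1$) you cannot invoke \autoref*{thm:hyperbolic-gaps}, whose hyperbolicity input (via Lemma~\ref{yoav} and Lee's criterion) requires $|s|$ large.

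The paper's proof avoids this by making $n$, not $s$, the parameter that goes to infinity: take $\alpha=T(1,1,0,0)$ and $\beta=T(m,m+1,1,\pm 1)$ and twist $\alpha$ along $\beta$ $n$ times to get $K^n=T(mn+1,mn+n+1,n,\pm 1)$, which is doubly primitive for every $n$ by Lemma~\ref{ttk-primitive}. The unbounded genus one bridge number then comes from Proposition~\ref{lower-bound} (the Baker--Gordon--Luecke catching-surface bound, which grows with $n$ and is independent of $s$), not from \autoref*{thm:hyperbolic-gaps}; and hyperbolicity, which your route left to Theorem~\ref{thm:hyperbolic-gaps}, must be argued differently at $s=\pm 1$: the paper uses the Moriah--Sedgwick argument that $K^n$ is tunnel number one (being primitive on one side), that toroidal tunnel number one knots are classified by Morimoto--Sakuma and have $b_1=1$, so for large $n$ the knots are atoroidal and not torus knots, hence hyperbolic. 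Finally, the type VII/VIII identification in the paper is concrete rather than a parameter-matching appeal: $K^n$ lies in $N(\alpha\cup\beta)\subset\Sigma$, and one checks directly that this once-punctured torus is a fiber of a trefoil or figure-eight knot (by untwisting $b_m$ to $b_1$ along $a$ and performing the $\mp 1$ surgery on the auxiliary curve $c$). Your outline of that identification is reasonable, but as written the proof cannot be completed because its two pillars --- Berge-ness and the hypotheses of \autoref*{thm:hyperbolic-gaps} --- cannot hold simultaneously.
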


A similar result holds for the Dean knots mentioned above:

\begin{theorem}\label{thm:hyperbolic-dean}
  There are hyperbolic Dean knots with arbitrarily large genus one bridge
  number. 
\end{theorem}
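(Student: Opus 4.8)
The plan is to run the same construction used for Theorem~\ref{thm:hyperbolic-berge}, but aimed now at the primitive/Seifert-fibered presentations that define Dean knots rather than the doubly primitive ones. Recall that a Dean knot is a twisted torus knot $T(p,q,r,s)$ which lies on the standard genus two Heegaard surface of $S^3$ and there represents a primitive element in one of the two handlebody groups and a ``Seifert'' element (one along which the surface slope produces a Seifert fibered space) in the other, and that by Dean's work~\cite{Dean03} this happens precisely when the triple $(p,q,r)$ satisfies explicit arithmetic conditions---conditions that constrain the residue of $r$ modulo $p$ and $q$, do not involve the twisting parameter $s$, and are consistent with the inequalities $1<r<p<q$. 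So the first step is to isolate Dean's conditions and to observe that they are satisfied by infinitely many admissible triples, with $p$ and $q$ arbitrarily large.

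The second step is to revisit the choice of parameters in the proof of Theorem~\ref{thm:hyperbolic-gaps}. There, given $C$, one chooses, roughly, $p$ large, then $q$ large and coprime to $p$ in a prescribed residue class, then $r$ with $1<r<p$ subject to the genericity hypotheses used in the volume and bridge-number estimates. I would check that Dean's conditions pin down $r$ only modulo $p$ (and $q$), so that---after enlarging $p$ and $q$ if necessary---one can choose $(p,q,r)$ to be simultaneously generic in the sense of Theorem~\ref{thm:hyperbolic-gaps} and primitive/Seifert in the sense of Dean. Since the hyperbolicity statement and the estimates $b_0(K)=p$ and $C\le b_1(K)\le\tfrac12 p$ of Theorem~\ref{thm:hyperbolic-gaps} depend only on those inequalities, on $|s|$ being large, and on the genericity hypotheses, they pass without change to this sub-family. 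Taking $|s|>18p$ then produces hyperbolic Dean knots with $b_1(K)\ge C$, which is the assertion of the theorem.

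The main obstacle is the compatibility check in the second step. One must verify that Dean's arithmetic conditions do not collide with the number-theoretic requirements already imposed in Theorem~\ref{thm:hyperbolic-gaps} (coprimality of $p$ and $q$, the prescribed residues, the ordering of $p$, $q$, $r$), and---the subtler point---that imposing them does not drive the knot into one of the degenerate cases, a torus knot, a cable, or a knot lying on a genus one surface, in which the bridge-number computation would collapse. Dean's analysis already separates out which $(p,q,r)$ produce such degenerations, so in the end this reduces to intersecting finitely many arithmetic progressions while avoiding a bounded set of bad residues; I expect it to succeed, but it is where the genuine work lies. A final, routine point is that distinct large parameters give distinct knot types, so that $b_1$ is genuinely unbounded along the family; this follows from the growth of hyperbolic volume for twisted torus knots already noted in the introduction.
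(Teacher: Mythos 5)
Your high-level idea---rerun the twisting construction behind \autoref*{thm:hyperbolic-berge}, but aim it at primitive/Seifert presentations---is the right one, but the execution breaks on the parameter $s$. You assert that Dean's conditions ``do not involve the twisting parameter $s$'' and then plan to take $|s|>18p$ so as to import the hyperbolicity statement and the bounds of \autoref*{thm:hyperbolic-gaps}. In fact the primitive/Seifert families used here (Theorem 4.1 of \cite{Dean03}) are $T(p,q,2q-p,\pm 1)$ and $T(p,q,p-kq,\pm 1)$: the Dean condition forces $s=\pm 1$ (compare \autoref*{ttk-primitive}, where primitivity in the outer handlebody already requires $s=\pm 1$). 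So the regime $|s|>18p$ needed for \autoref*{yoav}, and even the regime $|s|>5$ needed for the hyperbolicity input from Lee used in \autoref*{thm:hyperbolic-gaps}, is incompatible with producing Dean knots; the ``compatibility check'' you flag as the main obstacle cannot succeed, and it is not a matter of intersecting arithmetic progressions in $(p,q,r)$. A second, related misreading: the lower bound $b_1\geq C$ in \autoref*{thm:hyperbolic-gaps} is not a consequence of inequalities on $(p,q,r,s)$ together with genericity; it comes from \autoref*{lower-bound}, which applies only to knots $K^n$ obtained by twisting $\alpha$ along the annulus $\Rhat$, with the bound growing in the twist number $n$.

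The paper's proof keeps $s=\pm 1$ throughout. It chooses $\alpha=T(2,1,0,0)$ and $\beta=T(2m-1,m,1,\pm 1)$ (respectively $\alpha=T(l,1,0,0)$ and $\beta=T(lm+1,m,1,\pm 1)$) so that the twisted knots, e.g.\ $T(2mn-n+2,\,mn+1,\,n,\,\pm 1)$, satisfy Dean's arithmetic conditions on the nose; the genus one lower bound comes from \autoref*{lower-bound} (via \autoref*{BGL-maintheorem}, \autoref*{catching-sfce}, \autoref*{annulus-not-isotopic}, and \autoref*{no-essential-annulus}) and is independent of $s$; and hyperbolicity is obtained not from large $|s|$ but from the argument in the proof of \autoref*{thm:hyperbolic-berge}: primitivity on one side makes $K^n$ tunnel number one, toroidal tunnel number one knots have genus one bridge number one (Morimoto--Sakuma), and torus knots also have small genus one bridge number, so once $b_1(K^n)$ is large the knot is atoroidal and not a torus knot, hence hyperbolic. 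Your proposal is missing both of these ingredients---a hyperbolicity argument valid at $s=\pm 1$ and a source for the $b_1$ lower bound that does not presuppose large $|s|$---so as written it does not establish the theorem. (Your closing appeal to volume growth to distinguish knot types is also unnecessary: unboundedness of $b_1$ along the family follows directly from the lower bound.)
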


We remark that~\autoref*{thm:hyperbolic-berge} has been known to Ken Baker
and Jesse Johnson for some
time~\cite{BakerCommunication,JohnsonCommunication}.

Finally, we note that in Problems 2.1 and 2.3 of~\cite{Gordon05},
Eudave-Mu\~noz proposed the following:

\begin{problem}\label{problem1}
  Give explicit examples of tunnel number one knots with arbitrarily
  large genus one bridge number.
\end{problem}
\begin{problem}\label{problem2}
  Give explicit examples of tunnel number one knots $K$ with arbitrarily
  large genus one bridge number such that, in addition, a minimal genus
  Heegaard surface for the exterior $M_K$ has Hempel distance two.
\end{problem}
Relevant definitions may be found in~\cite{Gordon05}.  As a corollary
to Theorem 1.3 we provide a solution to these problems: 

\begin{corollary}\label{cor:distance-two}
For any $m > 1$, the family of Berge knots $K^n = T(mn+1,mn + n + 1,n, \pm 1)$ has the property that for sufficiently large $n$, $M_{K^n}$ has a minimal genus Heegaard surface of distance two, and
\[ \lim_{n \rightarrow \infty} b_1(K^n) = \infty.\]
\end{corollary}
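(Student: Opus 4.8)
The plan is to recognize the family $K^n = T(mn+1,\, mn+n+1,\, n,\, \pm 1)$ inside the setting of \autoref{thm:hyperbolic-berge} and then read off both conclusions from the structure underlying that theorem. First I would verify that each $K^n$ is genuinely a Berge knot of type VII or VIII. Writing $p = mn+1$, $q = mn+n+1$, $r = n$, and $s = \pm 1$, one checks that $1 < r < p < q$ together with $q = p+r$ and $p \equiv 1 \pmod r$; this is exactly the twisted--torus parametrization of knots carried by the fiber $F$ of a genus one fibered knot --- the trefoil or the figure eight according to the sign of $s$ --- and these are precisely the Berge knots of types VII and VIII. In particular $K^n$ is doubly primitive, so it has tunnel number one, it inherits a genus two Heegaard surface $\Sigma_n$ of $M_{K^n}$ from the unknotting tunnel dual to the doubly primitive structure, and surgery along the surface slope yields a lens space. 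Since $M_{K^n}$ is hyperbolic its Heegaard genus is at least two, so $\Sigma_n$ realizes the minimal genus.

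For $\lim_{n\to\infty} b_1(K^n) = \infty$ I would invoke the lower bound that powers \autoref{thm:hyperbolic-berge}: the genus one bridge number of a knot carried by $F$ is bounded below by a complexity of the curve $K^n \subset F$, concretely by the Farey distance between $\boundary F$ and the slope determined by $K^n$ together with the monodromy. For this family that complexity is an explicit increasing function of the parameters --- it grows with both $p = mn+1$ and $r = n$ --- so $b_1(K^n) \to \infty$, and the same largeness forces $K^n$ to be hyperbolic once $n$ is large.

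For the distance claim I would bound $d(\Sigma_n)$ on both sides. For the upper bound, after removing $N(K^n)$ the two doubly primitive disks of the genus two splitting of $S^3$ persist to essential disks on opposite sides of $\Sigma_n$, each meeting $\boundary N(K^n)$ in a single meridian; a direct count shows that their boundaries can be isotoped on $\Sigma_n$ to intersect in at most two points, whence $d(\Sigma_n) \le 2$. For the lower bound, hyperbolicity makes $M_{K^n}$ irreducible, so $\Sigma_n$ is not reducible and $d(\Sigma_n) \ge 1$; and if $\Sigma_n$ were weakly reducible, an untelescoping (Casson--Gordon, Scharlemann--Thompson) would either stabilize it --- impossible, since $M_{K^n}$ is not a lens space --- or cut $M_{K^n}$ along an essential surface of genus strictly less than two. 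The latter is excluded because the exterior of a Berge knot contains no closed essential surface and, for large $n$, the Seifert genus of $K^n$ exceeds one (the lens space has large order), so the thin surface cannot be a Seifert surface; alternatively, one can argue directly that a weakly reducible genus two splitting would confine $K^n$ to a family of bounded genus one bridge number, contradicting the previous step. Hence $d(\Sigma_n) = 2$ for large $n$.

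The hard part is this lower bound $d(\Sigma_n) \ge 2$. The upper bound is a concrete curve count from the doubly primitive picture, but ruling out weak reducibility genuinely uses both the smallness of Berge knot exteriors and the incompatibility of any untelescoping with the large genus one bridge number established above. A subsidiary point requiring care is to verify that the complexity appearing in the $b_1$ argument is precisely the quantity bounded below in the proof of \autoref{thm:hyperbolic-berge}, and not merely some parameter of $K^n$ that happens to diverge.
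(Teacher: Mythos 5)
Your overall skeleton matches the paper's: cite \autoref{thm:hyperbolic-berge} for hyperbolicity and unbounded $b_1$, use the doubly primitive picture to produce a genus two Heegaard surface of $M_{K^n}$ of distance at most two, and rule out distance zero or one using hyperbolicity. But the way you justify the central step $b_1(K^n)\to\infty$ is a genuine gap as written. You appeal to ``the lower bound that powers \autoref{thm:hyperbolic-berge},'' but then describe that bound as a Farey-distance/monodromy complexity for curves in the fiber of a genus one fibered knot. No such bound appears in the paper, and you yourself flag that you are unsure it is the right quantity. The bound actually used is \autoref{lower-bound}, coming from the Baker--Gordon--Luecke catching-surface machinery: taking $\alpha = T(1,1,0,0)$ and $\beta = T(m,m+1,1,\pm 1)$, so $\Delta = 1$ and $K^n = T(mn+1,mn+n+1,n,\pm 1)$, \autoref{lower-bound} gives $b_1(K^n) \geq \frac{1}{2}\left(\frac{n}{36(\abs{ad}+\abs{bc}+(\abs{a}-1)(\abs{b}-1)-1)}-1\right)$, which for fixed $m$ grows linearly in $n$; hyperbolicity for large $n$ then follows (as in the proof of \autoref{thm:hyperbolic-berge}) from the Morimoto--Sakuma classification of toroidal tunnel number one knots, not from ``largeness'' of the fiber complexity. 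Your step is fixable by a one-line citation of \autoref{lower-bound}, but as proposed it rests on an unestablished claim.

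Two further points in your distance argument are off, even though the conclusion is recoverable. For the upper bound, your ``direct count'' that the boundaries of the two primitivizing disks meet in at most two points is unsubstantiated and unnecessary: the paper's reason is simply that $D$ and $D'$ are both disjoint from $K^n$, which lies on the splitting surface, so after pushing $K^n$ into a handlebody both $\boundary D$ and $\boundary D'$ are disjoint from a common essential curve on $\Sigma$, giving distance at most two. For the lower bound, several of your supporting claims are wrong or irrelevant: untelescoping does not ``stabilize,'' and ruling that out because $M_{K^n}$ is not a lens space makes no sense for a manifold with torus boundary; the assertion that Berge knot exteriors contain no closed essential surface is not established (and not needed); and the Seifert genus has nothing to do with it. The correct and sufficient point, which is what the citation of~\cite{CassonGordon87} in the paper encodes, is that a weakly reducible genus two splitting of an irreducible exterior would yield a closed essential surface of genus at most one, i.e.\ an essential torus, contradicting hyperbolicity (and reducibility of the splitting would force tunnel number zero). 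Trim your argument to that and the proof goes through.
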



The plan of the paper is as follows:
In~\autoref{section:preliminaries}, we introduce relevant terminology
and important results used in the rest of the paper.  Next, we
determine the genus zero bridge numbers for twisted torus knots with
certain parameters in~\autoref{section:genus-zero}.
In~\autoref{section:construction}, we construct collections of twisted
torus knots and establish properties which will be used to prove the
main theorems in~\autoref{section:main}.

\textbf{Acknowledgments}  The authors would like to thank Jesse
Johnson and Ken Baker for sharing their knowledge of bridge numbers of
Berge knots.  We would also like to thank Yo'av Rieck for helpful
conversations.  Finally, the third author would like to thank Colby College for its hospitality.

\section{Preliminaries}\label{section:preliminaries}
For convenience, we work in $M = S^3$, although some of our results
hold in more general 3-manifolds. For a link $L \subset M$, we
denote the exterior $M_L =\closure{M\setminus N(L)}$ of $L$ in $M$ by
$M_L$ (and use $N(L)$ to denote an closed regular neighborhood of $L$
in $M$).

A collection of arcs $\tau$ properly embedded in a handlebody $H$ is
\emph{trivial} if each arc $t \in \tau$ cobounds a disk $\Delta$ with
an arc in $\pd H$ such that $\Delta \cap \tau = t$.  We call such
$\Delta$ a \emph{bridge disk}.  A \emph{$(g,b)$-bridge splitting} (or
$(g,b)$-splitting) is the decomposition of $(M,L)$ as $(V,\A)
\cup_{\Sigma} (W,\n)$, where $V$ and $W$ are genus $g$
handlebodies containing nonempty collections $\A$ and $\n$ of $b$
trivial arcs, $M = V \cup_{\Sigma} W$ is a Heegaard splitting, and $L
= \A \cup \n$.  We call the closed surface $\Sigma$ a
\emph{$(g,b)$-bridge surface} and let $\Sigma_L$ denote $\Sigma \cap
M_L$. The surface $\Sigma_L$ has $2b$ meridional boundary
components. It is well known that for any link $L \subset M$ and any
genus $g$, $(M,L)$ admits a $(g,b)$-bridge splitting for some $b$.
Thus, for each $g$ the link $L$ has a \emph{genus $g$ bridge number}
$b_g(L)$, where
\[ b_g(L) = \min\{b: L \text{ admits a $(g,b)$-splitting}\}\]
These invariants, first defined by Doll in \cite{Doll92}, generalize
the classical bridge number $b(L) = b_0(L)$ due to Schubert
\cite{Schubert54}.   

Given a $(g,b)$-bridge surface $\Sigma$ for a link $L$, we can create
a $(g+1,b-1)$-bridge surface for $L$ by tubing $\Sigma$ along any one
of the arcs $L \setminus \Sigma$. Thus, $b_{g+1}(L) \leq b_g(L) - 1$. This
process is called \defn{meridional stabilization} and is described in
1detail in \cite[Lemma 3.2]{ScharlemannTomova08}.

Let $S$ be a surface properly embedded in $M_L$.  A \scc in $S$ is
called $\defn{essential}$ if it is not parallel to a component of
$\boundary S$ and does not bound a disk in $S$.  A properly embedded
arc in $S$ is called \defn{essential} if it does not cobound a disk in $S$
with an arc in $\boundary S$.  A \emph{compressing disk} $D$ for $S$ is
an embedded disk such that $D \cap S = \pd D$ and $\pd D$ is an
essential curve in $S$.  A \emph{$\pd$-compressing disk} is an
embedded disk $\Delta$ such that $\Delta \cap S$ is an arc $\gamma$
which is essential in $S$, $\Delta \cap \pd M_L$ is an arc $\delta$,
and $\pd \Delta$ is the endpoint union of $\gamma$ and $\delta$.  We
say that $S$ is \emph{incompressible} if there does not exist a
compressing disk $D$ for $S$.  An incompressible surface $S$ is said
to be \emph{essential} if it is not $\boundary$--parallel and, in
addition, there is no boundary compressing disk for $S$.  As $\pd M_L$
is a collection of tori, $S$ is essential if and only if it is
incompressible and is not a $\pd$-parallel annulus or torus.

Suppose that $\Sigma$ is an $n$-bridge sphere for a link $L$ in $M=S^3$, and suppose further that there exist bridge disks $\Delta_1$ and $\Delta_2$ on opposite sides of $\Sigma$ with the property that $\Delta_1 \cap \Delta_2$ is one or two points contained in $L$.  If $|\Delta_1 \cap \Delta_2| = 1$, then we may reduce the number of bridges of $L$ with respect to $\Sigma$, and we say $\Sigma$ is \defn{perturbed}.  On the other hand, if $|\Delta_1 \cap \Delta_2| = 2$, we say $\Sigma$ is \defn{cancellable}.  In this case, there is a
component $L' \subset L$ contained in $\boundary \Delta_1 \cup
\boundary \Delta_2$, and the discs $\Delta_1 \cup \Delta_2$ may be used to
isotope $L'$, in the complement of $L \setminus L'$, into
$\Sigma$. Alternatively, we may isotope $\Sigma$ along $\Delta_1 \cup
\Delta_2$ to a bridge surface $\Sigma'$ for $L \setminus L'$
containing $L'$. We call $\Sigma'$ the \defn{result of cancelling
  $\Sigma$}.

Suppose now that $S\subset M$ is a connected, separating surface
which intersects $L$ transversely, and $S_L = \overline{S \setminus
  N(L)}$ is either compressible or boundary-compressible to both sides
in $M_L$.  Suppose further that for any pair of compressing or
boundary-compressing disks $D$ and $D'$ on opposite sides of $S_L$, we
have $D \cap D' \neq \emptyset$.  In this case we call $S$
\defn{strongly irreducible}.  The following result (a special case of
a theorem of Hayashi and Shimokawa) reveals the significance of strongly irreducible surfaces.

\begin{theorem}[\cite{HayashiShimokawa01}, Theorem
  1.2]\label{weakly-incompressible} 
  Suppose that $L$ is a link in $S^3$ and $\Sigma$ is a bridge sphere
  for $L$.  Then $\Sigma$ is perturbed, cancellable, or for each
  component $L'$ of $L$, there is strongly irreducible 2-sphere
  $\Sigma'$ which meets $L'$ and which intersects every component of
  $L$ at most as many times as $\Sigma$.
\end{theorem}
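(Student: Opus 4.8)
The plan is to run a relative thin-position argument on the pair $(S^3,L)$, untelescoping the bridge sphere $\Sigma$ in the style of Scharlemann--Thompson. Write the splitting as $(B_1,\tau_1)\cup_\Sigma(B_2,\tau_2)$ with $B_i$ balls and $\tau_i$ trivial tangles. First suppose $\Sigma$ is \emph{weakly incompressible relative to $L$}, meaning every compressing or $\partial$-compressing disk of $\Sigma_L$ on one side meets every such disk on the other. If $\Sigma$ is a $(0,b)$-bridge sphere with $b\ge 2$, each arc of $L\setminus\Sigma$ has a bridge disk whose arc on $\Sigma$ joins two distinct meridional components of $\partial\Sigma_L$ and is therefore non-separating, hence essential; so $\Sigma_L$ is $\partial$-compressible to both sides and $\Sigma$ is strongly irreducible. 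Since a bridge sphere meets every component of $L$, we may take $\Sigma'=\Sigma$ for every component $L'$. (For $b\le 1$ the link $L$ is unknotted and the statement is immediate.)

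Now suppose $\Sigma$ is not weakly incompressible relative to $L$, so there are disjoint disks $D_1\subset B_1\setminus\tau_1$ and $D_2\subset B_2\setminus\tau_2$, each a compressing or $\partial$-compressing disk of $\Sigma_L$, on opposite sides. Compressing or $\partial$-compressing $\Sigma$ simultaneously along $D_1$ and $D_2$ produces a \emph{generalized bridge splitting} of $(S^3,L)$: an alternating sequence of ``thick'' and ``thin'' surfaces meeting $L$ transversely. The point of working in $S^3$ starting from a sphere is that compressing or $\partial$-compressing a $2$-sphere yields $2$-spheres, so every surface appearing in this process is a disjoint union of (possibly punctured) $2$-spheres. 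Assign to each such splitting the width $w$, the non-increasingly ordered tuple of the numbers $|S\cap L|$ over its thick spheres $S$. Iterating, each weak reduction either is \emph{useful} --- it strictly decreases $w$ in lexicographic order while keeping all pieces $2$-spheres and leaving $|L_i\cap S|\le|L_i\cap\Sigma|$ for every component $L_i$ of $L$ and every piece $S$, this being the standard Scharlemann--Thompson complexity estimate (with $\partial$-compressions carried by the analogous count) --- or else, after discarding trivial ball summands, the resulting configuration exhibits $\Sigma$ as perturbed or cancellable and we stop.

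If no such degenerate reduction occurs, the process terminates at a generalized bridge splitting $\mathcal{S}$ of minimal width, whose pieces are all $2$-spheres. At minimal width each thin sphere is incompressible in $M_L$ and each thick sphere is strongly irreducible, for otherwise $w$ could be reduced further. Now fix a component $L'$. If $L'$ meets some thick sphere $S$, set $\Sigma'=S$: it is strongly irreducible, meets $L'$, and meets every component of $L$ at most as often as $\Sigma$. If $L'$ is disjoint from every thick sphere of $\mathcal{S}$, an innermost-sphere argument on the thin spheres (each of which bounds a ball in $S^3$), together with induction on the number of pieces of $\mathcal{S}$, either reduces to the previous case for a smaller splitting or exhibits $\Sigma$ as perturbed or cancellable; in every case we obtain the desired $\Sigma'$.

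I expect the last step to be the principal obstacle: carrying out the bookkeeping precisely enough that, for each chosen component $L'$, one gets a strongly irreducible \emph{sphere} that genuinely meets $L'$ without ever increasing any component's intersection number. Closely related, and also delicate, is pinning down exactly which degenerate weak reductions must be read off as $\Sigma$ being perturbed or cancellable, verifying that reading, and choosing a single complexity that controls ordinary compressions and $\partial$-compressions at once so that every non-degenerate weak reduction strictly decreases it.
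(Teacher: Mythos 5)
This theorem is not proved in the paper at all: it is quoted, in specialized form, from Hayashi and Shimokawa \cite{HayashiShimokawa01}, so the only meaningful comparison is with their argument. Your strategy --- thin position/untelescoping for the pair $(S^3,L)$, allowing both compressions and $\partial$-compressions, together with the observation that all pieces produced from a sphere remain punctured spheres --- is indeed the machinery of that source, so the route is the right one. But what you have written is an outline rather than a proof, and the gaps sit exactly where you yourself say you expect trouble; they are not routine bookkeeping.

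Concretely, three things are asserted but not established. First, you never exhibit a complexity that strictly decreases under both kinds of weak reduction and for which minimality forces every thick sphere to be strongly irreducible and every thin sphere incompressible; in the relative setting this is the main technical content of \cite{HayashiShimokawa01}, and one must also verify that after a reduction (and after discarding or amalgamating trivial pieces) the decomposition is again a generalized bridge splitting with trivial or vertical arcs, and that the intersection number with each component of $L$ never increases along the way. Second, the dichotomy ``the reduction is useful, or else $\Sigma$ is perturbed or cancellable'' is precisely what has to be proved: perturbation and cancellation are defined by bridge disks meeting in one or two points of $L$, while your weak reductions start from \emph{disjoint} disks, so the degenerate outcomes (a once-punctured level sphere bounding a trivial ball, a component isotoping into a level sphere, a boundary-compression that undoes a bridge, and so on) must be analyzed case by case and converted into one of those two conclusions; this is a genuine case analysis, not a remark. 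Third, the per-component conclusion --- a strongly irreducible sphere meeting the chosen component $L'$ with every component's intersection number bounded by its count with $\Sigma$ --- is dispatched with ``an innermost-sphere argument together with induction,'' but the case in which $L'$ misses every thick sphere is exactly where cancellability has to be extracted, and no argument is given. As it stands, the proposal correctly identifies the Hayashi--Shimokawa approach but does not constitute a proof of the theorem.
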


It is well known that any two essential surfaces in an irreducible
link exterior may be isotoped so that all arcs of intersection are
essential in both surfaces.  This property extends to a strongly
irreducible surface and an essential surface. This result is also
likely well known. A proof can be found in \cite[Proposition
6.1]{BlairEtAl13}; it is also implicit in \cite[Lemma 5.2]{Zupan13}.

\begin{lemma}\label{essential-intersection}
  Suppose $M_L$ is an irreducible link exterior containing an essential surface $S$ and a strongly irreducible surface $S'$.  There exists
  an isotopy after which all arcs of $S \cap S'$ are essential in both
  surfaces and $|\boundary S \cap \boundary S'|$ is minimal
  up to isotopy.
\end{lemma}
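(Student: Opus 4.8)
The plan is to run the standard innermost-disk/outermost-arc double-curve-sum argument, adapted to the asymmetry between an essential surface and a strongly irreducible one. First I would put $S$ and $S'$ in general position and, among all isotopy representatives of $S'$ (rel nothing) and $S$ with $|\boundary S \cap \boundary S'|$ minimal, choose a pair minimizing $|S \cap S'|$ (counting curve and arc components, say with a lexicographic complexity that first minimizes the number of closed intersection curves and then the number of arcs). The boundary intersections are already minimal by an isotopy of the slopes on the torus components of $\boundary M_L$, and standard innermost-disk arguments in $\boundary M_L$ let us assume no closed curve of $S \cap S'$ bounds a disk in $\boundary M_L$ or is parallel into $\boundary$; I would fold this into the minimality hypothesis. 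The goal is then to show that in a complexity-minimizing position, every arc of $S \cap S'$ is essential in both $S$ and $S'$.

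The core is a case analysis on an inessential arc. Suppose some arc $\gamma$ of $S \cap S'$ is inessential in, say, $S$; then an outermost such arc in $S$ cuts off a disk $E \subset S$ with $\partial E = \gamma \cup \delta$, $\delta \subset \boundary M_L$, and $\interior E \cap S' = \emptyset$. This disk $E$ is a $\boundary$-compressing disk for $S'$ (or its interior meets $\boundary M_L$ in closed curves, which one removes first by innermost-disk surgery on $S$, reducing $|S\cap S'|$ or $|\boundary S \cap \boundary S'|$). Since $S'$ is strongly irreducible, $S'_L$ $\boundary$-compresses (and compresses) to both sides, and any two such disks on opposite sides must intersect. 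So I would produce a second $\boundary$-compressing or compressing disk for $S'$ on the side of $S'$ opposite to $E$: this comes either from an innermost disk or outermost arc of $S \cap S'$ lying on the other side, or, if no intersection arcs/curves face that side, directly from the $\boundary$-compressibility of $S'_L$ witnessed by a disk disjoint from $E$ — contradicting strong irreducibility. Symmetrically, if the outermost inessential arc lies in $S'$, an outermost disk $E' \subset S'$ with interior disjoint from $S$ is a compressing or $\boundary$-compressing disk for $S$; since $S$ is essential it has no such disk, so we can use $E'$ to isotope $S$ across it, reducing $|S \cap S'|$ (after first checking this isotopy does not increase $|\boundary S \cap \boundary S'|$, which holds because the isotopy is supported near $E'$ and $\partial E' \cap \boundary M_L$ is a single arc that can be pushed off). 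Closed intersection curves that are inessential in one surface are handled the same way by innermost-disk surgery, using incompressibility of $S$ on one side and strong irreducibility of $S'$ on the other, again with the boundary-minimality preserved throughout.

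The main obstacle I expect is bookkeeping the interaction between the two minimality conditions: an isotopy of $S$ or $S'$ that removes an intersection arc must not create new boundary intersections, and conversely the fixed minimal boundary pattern constrains which outermost disks are available. I would handle this by first fixing the slopes of $\boundary S'$ on each torus of $\boundary M_L$ to realize $|\boundary S \cap \boundary S'|$ minimal, noting that all subsequent surgeries and isotopies (innermost disks in the interior, outermost-arc boundary compressions whose boundary arc is pushed slightly off $\boundary S$) are supported away from a neighborhood of $\boundary M_L$, so they leave $\boundary S \cap \boundary S'$ untouched. A secondary subtlety is that a ``$\boundary$-compressing disk'' for $S'$ obtained from an outermost disk in $S$ might have its $\boundary M_L$-arc $\delta$ running along a torus component also met by $S$; one checks $\delta$ can be isotoped rel endpoints in that torus off $\boundary S$ using minimality of $|\boundary S\cap\boundary S'|$, so the disk is genuinely of the required type. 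With these points pinned down, the contradiction with strong irreducibility of $S'$ (using essentiality of $S$ to rule out the mirror case) closes the argument, and one cites \cite[Proposition 6.1]{BlairEtAl13} for the details.
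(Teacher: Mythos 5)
The paper does not actually prove this lemma --- it quotes it as known and points to \cite[Proposition 6.1]{BlairEtAl13} and \cite[Lemma 5.2]{Zupan13} --- so the only question is whether your direct argument stands on its own, and I don't think it does as written. The gap is in your first case, where an arc $\gamma$ of $S\cap S'$ is inessential in $S$ and the outermost disk $E\subset S$ is a $\partial$-compressing disk for $S'$ on one side. The existence of such a disk is not by itself in tension with strong irreducibility: the definition \emph{guarantees} compressing or $\partial$-compressing disks on both sides and only forbids a \emph{disjoint} pair on opposite sides. So you must exhibit a second disk on the opposite side of $S'$ disjoint from $E$, and neither of your proposed sources delivers it. There may simply be no innermost disk or outermost arc of $S\cap S'$ facing the other side (all outermost pieces of $S$ can lie on one side of $S'$), and your fallback --- that the compressibility or $\partial$-compressibility of $S'$ to the other side is ``witnessed by a disk disjoint from $E$'' --- is unjustified: a disk on the other side of $S'$ has no reason to avoid $S$, hence no reason to avoid $E\subset S$. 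Making such a disk disjoint from $S$ (or from $E$) is exactly the hard step; it requires either surgering it along its arcs and circles of intersection with $S$ via a further outermost/innermost analysis, or replacing the whole scheme by the sweep-out/graphic-type arguments (Hayashi--Shimokawa, Scharlemann-style ``compressing to one side makes the other side incompressible'') that the cited proofs actually use. Without that, no contradiction with strong irreducibility is obtained and the case is open.

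Two smaller points. First, your two cases are not exclusive: an arc can be inessential in both surfaces, and there your second case is muddled --- if $\gamma$ is essential in $S$ then $E'$ already contradicts essentiality of $S$ and no isotopy is needed, while if $\gamma$ is inessential in both, the reduction move is not ``supported away from $\partial M_L$'': removing $\gamma$ necessarily removes its two endpoints from $\partial S\cap\partial S'$, so the isotopy does move the boundary, and in a position where $|\partial S\cap\partial S'|$ is already minimal this case must instead be killed by a bigon argument on the boundary torus (or, in the degenerate situation where the curve $\partial(E\cup E')$ is essential in $\partial M_L$, by handling the compressible-boundary/solid-torus case separately), not by the isotopy you describe. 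Second, the claim that closed curves of intersection are ``handled the same way'' again quietly assumes you can push compressions of $S'$ to the correct side, which is the same missing step as above. The second case (outermost arc inessential in $S'$, essential in $S$, contradicting essentiality of $S$) is fine and needs no strong irreducibility; it is the $S$-side case where your argument needs a genuinely new ingredient.
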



Once we know that all arcs of intersection of two surfaces are
essential in both of them, we can utilize the next lemma, which is
based on similar results proved by
Gordon-Litherland~\cite{GordonLitherland84}, Rieck~\cite{Rieck00}, and
Torisu~\cite{Torisu96}.

\begin{lemma}\label{arc-annulus-bound}
  Let $L \subset M$ be a link such that $M_L$ is irreducible, $\boundary$-irreducible, and
  anannular. Suppose that $F$ and $G$ are connected,
  orientable surfaces with nonempty boundary properly embedded in
  $M_L$ such that $\chi(F),\chi(G) < 0$, $G$ is essential in $M_L$,
  $|\partial F \cap \partial G|$ is minimal up to isotopy, and $F$ and
  $G$ intersect in a nonempty collection of $n$ essential arcs.  Then
  \[ n\leq 9\chi(F)\chi(G). \]
\end{lemma}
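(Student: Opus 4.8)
The plan is to run a double-counting argument on the intersection pattern of $F$ and $G$, using the fact that the arcs of $F\cap G$ are essential to control the combinatorics. Consider the graph $\Gamma_F$ on $F$ whose vertices are the components of $\partial F$ (of which there are at most $-\chi(F)$, since each contributes $1$ to first homology... more carefully, the number of boundary components of a connected orientable surface with $\chi(F)<0$ is at most $2-\chi(F)$, but for counting purposes I would bound the number of boundary arcs of the arc complex by a linear function of $\chi(F)$) and whose edges are the arcs $F\cap G$. Do the same on $G$ to get $\Gamma_G$. Since all $n$ arcs are essential in $F$, I would first cut $F$ along a maximal subcollection of arcs that are pairwise non-parallel in $F$; the number of isotopy classes of essential arcs that can be realized disjointly in $F$ is at most $3|\chi(F)|$ (a standard bound: a pants-type decomposition of $F$ has at most $-3\chi(F)$ arcs/curves). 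Symmetrically for $G$. Then the key point is a bound on the number of mutually parallel arcs in each surface.

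The heart of the matter is bounding how many arcs of $F\cap G$ can be parallel in $F$ \emph{and} how an arc that is parallel to many others in $F$ interacts with $G$. Here I would invoke the hypothesis that $M_L$ is irreducible, $\partial$-irreducible, and anannular, following the Gordon--Litherland / Rieck / Torisu template: if a large collection of arcs is parallel in $F$, the parallelism regions are bigons (disks) in $F$; pushing these across, together with corresponding parallelisms in $G$, would produce either a compressing or boundary-compressing disk for $G$ (contradicting that $G$ is essential), or an essential annulus or a reducing sphere in $M_L$ (contradicting anannularity/irreducibility). Specifically, two arcs parallel in $F$ and also parallel in $G$ cobound a disk or an annulus in $M_L$; chasing this through the anannular hypothesis bounds the number of arcs that are simultaneously parallel in both surfaces by a universal constant (this is where the constant gets its size). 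The main obstacle is organizing this case analysis cleanly: one must separately handle arcs that are $\partial$-parallel in $G$ versus genuinely essential, and track the Euler characteristic bookkeeping so that the number of parallelism classes in $F$ times the number in $G$ times the bound on simultaneous parallels multiplies out to exactly $9\chi(F)\chi(G)$.

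Putting the pieces together: the arcs of $F\cap G$ fall into at most $3|\chi(F)|$ parallelism classes in $F$; within each such class, the arcs meet at most $3|\chi(G)|$ parallelism classes in $G$; and the number of arcs that are simultaneously parallel in $F$ and in $G$ is at most $1$ (once $|\partial F\cap\partial G|$ is minimized and using essentiality plus anannularity — any two such would let us reduce $|\partial F\cap\partial G|$ or build a forbidden surface). Hence $n \le 1 \cdot (3|\chi(F)|)(3|\chi(G)|) = 9\chi(F)\chi(G)$, using $\chi(F)\chi(G)=|\chi(F)||\chi(G)|$ since both are negative. I expect the delicate step to be verifying that simultaneous parallelism in both surfaces is ruled out (or bounded by a small constant) under exactly the stated hypotheses — this requires carefully producing the annulus or disk and checking it is essential, and it is precisely the step where "anannular" is indispensable.
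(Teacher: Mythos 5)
Your proposal takes essentially the same route as the paper's proof: bound the number of pairwise non-parallel essential arcs in each surface by $-3\chi$, and show that no two arcs of $F\cap G$ can be parallel in both $F$ and $G$ by gluing the two parallelism rectangles into a properly embedded surface and deriving a contradiction from irreducibility, $\partial$-irreducibility, anannularity, essentiality of $G$, or the minimality of $|\partial F\cap\partial G|$, which yields exactly $n\le 3|\chi(F)|\cdot 3|\chi(G)|\cdot 1 = 9\chi(F)\chi(G)$. The one detail you elide is that the union of the two rectangles is an annulus or a M\"obius band (not a disk); the M\"obius band case is handled in the paper by irreducibility (an embedded $\mathbb{RP}^2$) together with a lemma of Rieck when its boundary is essential.
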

\begin{proof}
  Let $S$ be a connected, orientable surface properly embedded in
  $M_{L}$ such that $\chi(S)<0$.  Let $\Lambda$ be a collection of
  properly embedded essential arcs in $S$ such that no two are
  parallel in $S$.  Then $\Lambda$ can be completed to an ideal
  triangulation of the interior of $S$ by adding more edges between
  components of $\boundary S$ if necessary.  Let the new collection of
  edges be $\Lambda'$ and the set of faces of the ideal triangulation
  be $F$.  Then we have $3\abs{F}=2\abs{\Lambda'}$ as well as
  $\chi(S)=-\abs{\Lambda'}+{F}$, and so
  $\abs{\Lambda}\leq\abs{\Lambda'}\leq -3\chi(S)$.


  Viewing the intersection $F \cap G$ as a graph
  $\Lambda_F \subset F$, let $\Lambda_F'$ be the reduced graph
  (obtained by combining all sets of parallel edges into a single
  edge), and let $m_F$ be the maximal number of mutually parallel
  edges in $\Lambda_F$, so that each edge in $\Lambda_F'$ corresponds
  to at most $m_F$ edges of $\Lambda_F$.  As $\Lambda'_F$ has no
  parallel edges,
  \[   n/m_F \leq -3\chi(F).   \] 
  and so
  \[   n/(-3\chi(F)) \leq m_F.   \]
  Let $\Lambda_G$ be the graph in $G$ consisting of a collection of
  $m_F$ arcs of $F \cap G$ which are mutually parallel in $F$.  If
  $\Lambda_G$ has no parallel edges, then $m_F \leq -3\chi(G)$; hence
  \[   n \leq 9\chi(F)\chi(G).   \]

  Otherwise, there are arcs $\lambda_1, \lambda_2 \subset F \cap G$
  which are parallel in both $F$ and $G$, chosen to be adjacent in
  $G$.  Then $\lambda_1$ and $\lambda_2$ cobound rectangles $R_F
  \subset F$ and $R_G \subset G$ with arcs in $\partial F$ and
  $\partial G$, and we have that $A = R_F \cup R_G$ is a properly
  embedded annulus or M\"obius band.  Suppose first that $A$ is a
  M\"obius band.  If $\partial A$ is inessential in $\partial M_L$,
  then $M_L$ contains a properly embedded $\mathbb{RP}^2$,
  contradicting that $M_L$ is irreducible.  The case in which
  $\partial A$ is essential is ruled out by Lemma 5.1
  of~\cite{Rieck00}.  Next, suppose that $A$ is an annulus.  If one
  component of $\partial A$ is inessential, then compressing along
  this component yields an embedded disk; hence, the other component
  of $\partial A$ must be inessential as $\boundary M_L$ is
  incompressible.  In this case, $R_F$ is isotopic to $R_G$ in $M_L$.
  However, such an isotopy would allow us to reduce the number of arcs
  of $F \cap G$, contradicting the minimality of $\partial F
  \cap \partial G$.  Thus, we may assume that both components of
  $\partial A$ are essential.  As such, $A$ is incompressible.  If $A$
  is boundary parallel, then $\lambda_1$ cobounds a boundary
  compressing disk for $G$, which contradicts the assumption that $G$
  is essential.  It follows that $A$ is an essential annulus,
  contradicting the assumption that $M_L$ is anannular.
\end{proof}

We make one more definition before proceeding.  Let $K$ be a knot in
$M=S^3$.  Then $H_1(\pd N(K))$ has a natural basis
$([\mu],[\lambda])$, where $\mu$ bounds a meridian disk of $N(K)$ and
$[\lambda] = 0$ in $H_1(M_K)$.  We parameterize a given curve $\gamma
\subset \pd M_K$ as a fraction (or \defn{slope}) $\frac{a}{b}$, where
$[\gamma] = a[\mu] + b[\lambda]$.  Given such a $\gamma$, we may
construct a new manifold $M_K(\gamma)$ by gluing a solid torus $V$ to
$\pd M_K$ so that a curve bounding a meridian disk of $V$ is glued to
$\gamma$.  We say that the resulting manifold $M_K(\gamma)$ is the
result of \defn{$\gamma$ Dehn surgery} on $K$.

\section{Genus zero bridge numbers of some twisted torus knots}\label{section:genus-zero}
To begin this section, we define the class of knots known as twisted
torus knots.  Assume that $p,q>1$ are relatively prime integers.
Consider a $(p,q)$ torus knot $T_{p,q}$ which lies on a Heegaard torus
$T$ for $S^3$.  Let $C$ be a curve bounding a disk $D$ such that $D$
meets $T$ in a single arc and the interior of $D$ meets $K$ in $0\leq
r\leq p+q$ points of the same sign.  We say that the result of doing
$-1/s$ Dehn surgery on $C$ is the \defn{twisted torus knot} $T(p, q,
r, s)$.  Informally, this is the knot obtained from $T_{p,q}$ by
twisting $r$ parallel strands by $s$ full twists.  Note that we leave
open the possibility that $r\leq 1$, in which case the resulting knot
is clearly a torus knot.  Note also that various alternate definitions
of twisted torus knots exist in the literature.  For a discussion of
these variations, see (for instance) \cite{FuterEtAl11}.

For the remainder of this section, we let $K$ denote $T_{p,q}$.  If
the link $K\cup C$ is hyperbolic, we know from results of Thurston
that the resulting knot will be hyperbolic for all but finitely many
surgeries.  The following proposition of Lee says that, for most
values of $p$, $q$, and $r$, this is indeed the case.

\begin{prop}[\cite{Lee13}, Proposition 5.7]\label{hyperbolic-twists}
  When $r>1$ is not a multiple of $p$ or $q$, the link $K \cup C$
  is hyperbolic.
\end{prop}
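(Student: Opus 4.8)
The plan is to realize the complement of $K \cup C$ as a surface bundle or, more efficiently, to appeal to the structure of $C$ as a curve on the genus two Heegaard surface and use known hyperbolicity criteria for the relevant link complements. Concretely, $T = \partial N(C')$ can be arranged so that $K \cup C$ lies in a very symmetric position: $K = T_{p,q}$ sits on a Heegaard torus, and $C$ bounds a disk meeting $T$ in a single arc. The first step is to identify the complement $S^3 \setminus N(K \cup C)$ with a well-understood manifold. Since $C$ is unknotted and its linking with $K$ is controlled, $S^3 \setminus N(C)$ is a solid torus, and $K$ becomes a specific curve in that solid torus; thus $S^3 \setminus N(K \cup C)$ is the exterior of a knot in a solid torus. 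The task reduces to showing this knot-in-a-solid-torus exterior is hyperbolic, i.e., admits no essential spheres, disks, annuli, or tori.

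Next I would rule out each type of essential surface in turn. The complement is irreducible and $\partial$-irreducible because $K$ is not contained in a ball disjoint from $C$ and is not isotopic into $\partial N(C)$ for $r > 1$ (the geometric intersection with $D$ is essential). The key classification step is the annulus and torus analysis: an essential annulus or torus in the exterior of $K \cup C$ would, after surgery, produce an essential annulus or torus or a Seifert-fibered or toroidal structure in many twisted torus knot complements, but one can instead argue directly. An essential torus would decompose $S^3 \setminus N(K\cup C)$ and, because one of the two pieces must be a knot or link complement in $S^3$ while the other contains $\partial N(C)$, one gets strong constraints forcing $K$ to be a torus knot that is compatible only when $r$ is a multiple of $p$ or $q$ — this is precisely where the hypothesis enters. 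The essential annulus case similarly forces $C$ to be isotopic onto $T$ or $K$ to wrap trivially, again excluded when $r>1$ is not a multiple of $p$ or $q$.

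The main obstacle I expect is the careful annulus analysis: showing that no essential annulus connects $\partial N(K)$ to $\partial N(C)$ or has both boundary components on one of these tori. An annulus from $\partial N(K)$ to $\partial N(C)$ would exhibit $K$ and $C$ as ``parallel'' in a strong sense, and one must use the precise description of $C$ (bounding a disk $D$ meeting $T$ in one arc, meeting $K$ in $r$ points of the same sign) to derive a contradiction from the hypothesis on $r$. Since this is exactly Proposition 5.7 of Lee's paper \cite{Lee13}, in the write-up I would simply cite that result rather than reproduce the surface-by-surface argument; the above sketch indicates the shape of the proof one finds there.
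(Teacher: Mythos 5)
The paper gives no proof of this proposition at all: it is quoted verbatim as Proposition 5.7 of Lee \cite{Lee13}, and your write-up likewise ends by citing that result, so your disposition matches the paper exactly. Your preceding sketch of the surface-by-surface hyperbolicity argument is only heuristic (and not load-bearing, since you defer to Lee for the actual annulus/torus analysis), so there is nothing further to check against the paper.
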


Next, we exhibit an incompressible surface in $M_{K \cup C}$ which
will play the role of $G$ in~\autoref*{arc-annulus-bound} above.
Since $K$ is a torus knot, $M_K$ contains an essential annulus $G'$
which intersects $C$ transversely in two points.  We let $G$ denote
$G' \cap M_{K \cup C}$, so that $G$ may be regarded at a
twice-punctured annulus (or a 4-punctured sphere).

\begin{lemma}\label{four-punctured-sphere}
The surface $G$ is essential in $M_{K \cup C}$.
\end{lemma}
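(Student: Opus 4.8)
We need to show $G = G' \cap M_{K \cup C}$ is essential, i.e. incompressible and not boundary-parallel, and (since the ambient manifold has toroidal boundary) not a boundary-parallel annulus or torus. Here $G'$ is the essential (Seifert-fibered) annulus in $M_K$ for the torus knot $K = T_{p,q}$; it is the cabling annulus, cobounding with a regular fiber on the Heegaard torus $T$. The curve $C$ lies on $T$ — it bounds a disk meeting $T$ in one arc — and after isotopy we may assume $C$ meets $G'$ transversely in exactly two points, so $G$ is a four-punctured sphere (two boundary circles on $\partial N(K)$, two meridional circles on $\partial N(C)$).

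**Incompressibility.** The key point is that $G' $ is incompressible in $M_K$ and $M_K$ is irreducible. Suppose $D$ is a compressing disk for $G$ in $M_{K\cup C}$. View $D$ inside the larger manifold $M_K = M_{K\cup C} \cup N(C)$. The boundary $\partial D$ is essential in $G$; I want to argue it remains essential in $G'$, using that the two punctures of $G$ come from the disk $D_C$ bounded by $C$. A curve in the four-punctured sphere $G$ that is inessential in the annulus $G'$ must bound a subsurface of $G'$ containing exactly one of the two $C$-punctures (it cannot bound a once-punctured disk since the puncture circles are essential, and cannot separate both punctures from the $\partial N(K)$ boundary since $G'$ is an annulus). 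But such a curve, together with the corresponding half of the disk $D_C$, would let us isotope $C$ off $G'$, reducing $|C \cap G'|$ below $2$ — and $|C\cap G'| = 2$ is minimal because $C$ is not isotopic into a single annulus component of the torus-knot exterior (as $1 < r \le p+q$ forces $[C]$ to be homologically nontrivial on $T$ in a way incompatible with lying in a fiber). So $\partial D$ is essential in $G'$, and $D$ (pushed off $N(C)$ if necessary, or absorbed) gives a compression of $G'$ in $M_K$, a contradiction.

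**Not boundary-parallel.** If $G$ were boundary-parallel in $M_{K\cup C}$, it would cobound a product region $P \cong G \times I$ with a subsurface of $\partial M_{K\cup C}$. Part of that boundary subsurface lies on $\partial N(C)$; capping off the two meridional punctures of $G$ by meridian disks of $N(C)$ — which we can do within $P \cup N(C)$ — would exhibit $G' = T_{p,q}$'s cabling annulus as boundary-parallel in $M_K$, contradicting that $G'$ is essential. One must check the product region $P$ actually contains the cores of the capping disks rather than meeting $N(C)$ in some complicated way; this is handled by choosing $C$ and the intersection pattern carefully and using irreducibility to tidy up $P \cap N(C)$. Finally, $G$ has negative Euler characteristic (it is a four-punctured sphere), so it is certainly not a boundary-parallel annulus or torus, which disposes of the last clause in the definition of essential.

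**Main obstacle.** The delicate step is the incompressibility argument: ruling out that a compressing disk for $G$ in $M_{K\cup C}$ becomes inessential or disappears when we fill in $N(C)$. This hinges on the claim that $|C \cap G'| = 2$ realizes the geometric intersection number, equivalently that $C$ cannot be isotoped (in $M_K$) to meet $G'$ fewer than twice. I would establish this by a homological computation on the Heegaard torus $T$: $[C]$ is a specific slope determined by the twisting, and since $r>1$ (and $r$ is not a multiple of $p$ or $q$ by the standing hypothesis feeding Proposition \ref{hyperbolic-twists}), $C$ is neither parallel to the torus-knot fiber nor trivial, so it must cross each cabling annulus at least twice. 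Once that minimality is in hand, every inessential-in-$G'$ curve on $G$ yields an isotopy reducing $|C\cap G'|$, which is the contradiction we need, and the rest of the argument is routine innermost-disk and product-region bookkeeping.
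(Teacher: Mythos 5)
There is a genuine gap, and it lies exactly where you flagged the ``delicate step.'' Your case analysis of a compressing-disk boundary $\gamma=\partial D$ is backwards. A curve that is essential in the $4$-punctured sphere $G$ but inessential in the annulus $G'$ must bound a disk in $G'$ containing \emph{both} points of $C\cap G'$: if the disk contained exactly one puncture, $\gamma$ would be parallel in $G$ to that boundary circle and hence not essential in $G$, contradicting that it bounds a compressing disk. Your claim that $\gamma$ ``cannot separate both punctures from the $\partial N(K)$ boundary since $G'$ is an annulus'' has no justification and is false -- a circle in a subdisk of $G'$ encircling both intersection points is perfectly possible a priori. Consequently your plan collapses: the only case you would ever need to rule out is the twice-punctured-disk case, and in that case $D$ gives neither a compression of $G'$ in $M_K$ nor any isotopy reducing $|C\cap G'|$ (capping the twice-punctured disk with $D$ just produces an annulus with two meridional boundary circles on $\partial N(C)$), so neither the essentiality of $G'$ in $M_K$ nor your homological minimality claim for $|C\cap G'|$ yields a contradiction. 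Note also that $\gamma$ being essential in $G'$ is immediately absurd for a cheaper reason than incompressibility of $G'$: such a $\gamma$ is isotopic to the core of $G'$, a nontrivial $(p,q)$ torus knot, which cannot bound an embedded disk in $S^3$ at all; this is how the actual argument reduces to the twice-punctured case.

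The missing ingredient is the hyperbolicity (in particular anannularity) of the link exterior $M_{K\cup C}$ from Proposition~\ref{hyperbolic-twists}, which your proposal never uses for incompressibility. The paper's proof handles the twice-punctured-disk case by compressing $G$ along $D$: one of the resulting pieces is an annulus isotopic (in $M_K$, using irreducibility) to the cabling annulus $G'$ pushed off $C$, and this is an essential annulus in $M_{K\cup C}$, contradicting hyperbolicity of $K\cup C$. Without invoking that input, the case cannot be excluded by surface topology inside $G'$ alone. A secondary point: your separate boundary-parallelism discussion (the product region $P$ and ``tidying up'' $P\cap N(C)$) is unnecessary and, as written, hand-waved; since $\partial M_{K\cup C}$ consists of tori and $G$ is a $4$-punctured sphere (not an annulus or torus), incompressibility alone suffices for essentiality by the remark in Section~\ref{section:preliminaries}, which is the shortcut the paper takes.
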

\begin{proof}
  Since $G$ is a 4-punctured sphere, it suffices to show that $G$ is
  incompressible.  Suppose by way of contradiction that $\gamma
  \subset G$ bounds a compressing disk $D$ in $M_{K \cup C}$.  As a
  curve in the annulus $G'$, $\gamma$ must be inessential since $K$ is
  a nontrivial torus knot.  Thus, $\gamma$ bounds a twice-punctured
  disk in $G$, and compressing $G$ along $D$ yields an essential annulus contained in $M_{K \cup C}$,
  contradicting~\autoref*{hyperbolic-twists}.
\end{proof}

In the link manifold $M_{K \cup C}$, let $T_K = \pd N(K)$ and $T_C =
\pd N(C)$.  In addition, for each $s$, let $K_s=T(p,q,r,s)$ and $L_s =
K_s\cup C_s$, where $C_s$ is the core of the solid torus which results
from performing $1/s$ surgery on the twisting curve $C$.  With this
notation, $K = K_0$ and $C = C_0$.  In the following lemma, we compare
the $b_0(K_s)$ to $b_0(K)$, where
\[ b_0(K) = \min\{p,q\}\]
by a theorem of Schubert~\cite{Schubert54} with a modern proof given by
Schultens~\cite{Schultens07}.

\begin{lemma}\label{yoav}
  Given $p$, $q$, and $r$ satisfying $1<r < p < q$, if $|s| > 18p$, we
  have
  \[ b_0(K_s)= p. \]
\end{lemma}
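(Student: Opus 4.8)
The plan is to bound $b_0(K_s)$ both above and below. The upper bound $b_0(K_s) \le p$ is immediate from the construction: $K_s$ lies on the Heegaard torus $T$ except near the twisting region, and twisting $r \le p$ strands preserves the fact that $K_s$ meets a meridian disk of one of the solid tori bounded by $T$ in $p$ points, giving a $(0,p)$-bridge splitting. (More carefully, $K_s$ is isotopic to a curve that, together with the twist region's geometry, still presents as $p$ strands over the standard torus bridge sphere; since $r < p < q$, the count $p = \min\{p, q\}$ survives.) So the content is the reverse inequality $b_0(K_s) \ge p$, which must hold once $|s|$ is large.

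For the lower bound, I would argue by contradiction: suppose $\Sigma$ is an $n$-bridge sphere for $K_s$ with $n < p$, chosen with $n$ minimal, so $\Sigma$ is not perturbed. Since $K_s$ is a knot and $\Sigma$ is a single-component bridge sphere, it cannot be cancellable either (cancelling would require a second component). Hence by Theorem~\ref{weakly-incompressible} there is a strongly irreducible $2$-sphere $\Sigma'$ meeting $K_s$ in at most $2n$ points, i.e. $\Sigma'_{K_s} = \Sigma' \cap M_{K_s}$ is a planar surface with at most $2n$ meridional boundary components and $\chi(\Sigma'_{K_s}) \ge 2 - 2n$. Now I want to play $\Sigma'_{K_s}$ against the essential surface coming from the torus-knot structure. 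The key is that $C_s$, the core of the surgery solid torus, can be put back: in $M_{K_s}$ we can regard $G$ (the $4$-punctured sphere of Lemma~\ref{four-punctured-sphere}, which lives in $M_{K \cup C} = M_{K_s \cup C_s}$) or rather reinterpret everything in the link exterior $M_{L_s}$. Since $K \cup C$ is hyperbolic (by a suitable choice of $p, q, r$ via Proposition~\ref{hyperbolic-twists}), the exterior $M_{L_s}$ is irreducible, $\partial$-irreducible, and anannular for all but finitely many $s$ — and in fact the relevant surface $G$ stays essential.

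The heart of the argument is then Lemma~\ref{arc-annulus-bound}. I would isotope $\Sigma'$ in $M_{L_s}$ so that it meets $\partial N(C_s)$ efficiently; the number of times $\Sigma'$ meets the core $C_s$ is essentially $|s|$ (up to a bounded additive error like $\pm 2$ or so coming from the original $|\Sigma' \cap C| \le$ something), because the strongly irreducible sphere must cross the twist region, and each full twist forces intersections with the surgery torus — this is where the hypothesis $|s| > 18p$ enters. Restricting $\Sigma'$ to $M_{L_s}$ gives a surface $F$ with $-\chi(F)$ controlled by $n$ and by $|\Sigma' \cap C_s|$, and with $|\partial F \cap \partial G|$ forcing a large number $N$ of essential arcs of $F \cap G$ after applying Lemma~\ref{essential-intersection} (using that $G$ is essential and $F$ can be made strongly irreducible in the link exterior, so arcs can be made essential in both). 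Then Lemma~\ref{arc-annulus-bound} gives $N \le 9\chi(F)\chi(G)$. Counting: $G$ is a $4$-punctured sphere so $\chi(G) = -2$; meanwhile $N \gtrsim |s|$-ish while $-\chi(F) \lesssim 2n + (\text{const})$, so $9 \cdot \chi(F)\chi(G) = 18|\chi(F)|$ is bounded by roughly $18(2n + c)$, whereas $N$ grows linearly in $|s|$. Choosing the threshold $18p$ correctly, the inequality $N \le 18|\chi(F)|$ fails when $|s| > 18p$ and $n < p$, a contradiction. Hence $n \ge p$, and combined with the upper bound, $b_0(K_s) = p$.

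The main obstacle I expect is the bookkeeping in the second and third steps: making precise (a) that the strongly irreducible sphere $\Sigma'$ in $M_{K_s}$ can be reinterpreted as a surface in the link exterior $M_{L_s}$ that is still strongly irreducible there (so that Lemma~\ref{essential-intersection} applies), and (b) the linear lower bound on $|\Sigma' \cap C_s|$ in terms of $|s|$ — this requires understanding how a low-complexity surface must wrap around the twist region, and is the step where the constant $18$ and the threshold $18p$ are pinned down. The arc-counting inequality and the Euler characteristic arithmetic are then routine, but getting the right numerical constants so that the contradiction is clean is the delicate part.
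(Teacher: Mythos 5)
Your toolbox is the right one (Hayashi--Shimokawa, the essential $4$-punctured sphere $G$, \autoref*{essential-intersection}, and the arc count of \autoref*{arc-annulus-bound}), but the two steps you flag as ``the main obstacle'' are exactly where the argument breaks, and the fixes change the structure of the proof. First, the factor $|s|$ does not come from the sphere meeting the core $C_s$ roughly $|s|$ times --- no such linear lower bound on $|\Sigma' \cap C_s|$ holds or is needed. It comes from boundary slopes: in the common exterior $M_{L_s}\cong M_{K\cup C}$, the components of $\Sigma'\cap T_C$ are meridians of $C_s$, i.e.\ curves of slope $-1/s$ in the coordinates of $C$, while $G\cap T_C$ consists of two meridians of $C$; each pair of such curves meets in $|s|$ points, so $|\Sigma'\cap G\cap T_C| = 2|s|\cdot|\Sigma'\cap C_s|$ and one only needs $|\Sigma'\cap C_s|\ge 2$. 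But in your set-up even that minimal requirement is unavailable: you applied \autoref*{weakly-incompressible} to the knot $K_s$ alone, so the resulting strongly irreducible sphere carries no control whatsoever on its intersection with $C_s$ --- it could be isotoped off $C_s$ entirely, in which case no arcs of $\Sigma'\cap G$ are forced and no contradiction appears.

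Second, your dismissal of cancellability (``cancelling would require a second component'') removes the very case that produces the lower bound. The correct set-up is to work with the link $L_s = K_s\cup C_s$: take a minimal bridge sphere $\Sigma_s$ for $K_s$ and isotope it to minimize $(|\Sigma_s\cap K_s|,|\Sigma_s\cap C_s|)$ lexicographically. If $\Sigma_s$ is cancellable, then $C_s$ (not $K_s$) is the component that gets isotoped onto $\Sigma_s$; undoing the $s$ twists along $C_s\subset\Sigma_s$ exhibits $\Sigma_s$ as a bridge sphere for the torus knot $K$, giving $p=b_0(K)\le b_0(K_s)$ directly. The counting argument with $G$ and \autoref*{arc-annulus-bound} is used only to rule out the non-cancellable case: there it yields $|s|\le 18p$, contradicting $|s|>18p$. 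Note that this inequality constrains $|s|$, not the bridge number, so your plan of deriving ``$n\ge p$'' straight from the Euler-characteristic count cannot work --- without the cancellation-and-untwisting step there is no mechanism in your argument that ever produces the inequality $b_0(K_s)\ge p$.
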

\begin{proof}
  The torus knot $K$ lies on a Heegaard torus $T$ for $S^3$. We may
  consider $T$ as the boundary of a regular neighborhood of an unknot
  in 1-bridge presentation with respect to a bridge sphere $\Sigma$
  for $S^3$. In fact, we may arrange for $\Sigma$ to be a $p$-bridge
  sphere for $K$. The bridge sphere $\Sigma$ then realizes the minimal
  bridge number $b_0(K) = p$ of $K$.  Since $r < p$, it is not hard
  (though somewhat tedious) to show that $C$ may be isotoped in the
  exterior of $K$ to lie on $\Sigma$. Performing $-1/s$ surgery on $C$
  is equivalent to performing $-s$ Dehn twists on an annular
  neighborhood of $C \subset \Sigma$. The bridge sphere $\Sigma$ is
  then a $p$-bridge sphere for $K_s$, showing that
  \[ b_0(K_s) \leq p, \] 
  for any integer $s$. Observe also that
  $\Sigma$ is a $(p+1)$-bridge sphere for $L_s$ with $|\Sigma \cap L_s| = 2p +
  2$.  See~\autoref*{fig:ttk-link}.

  \begin{figure}[h!tb]
    \begin{center}
      \def\svgwidth{0.3\textwidth}
      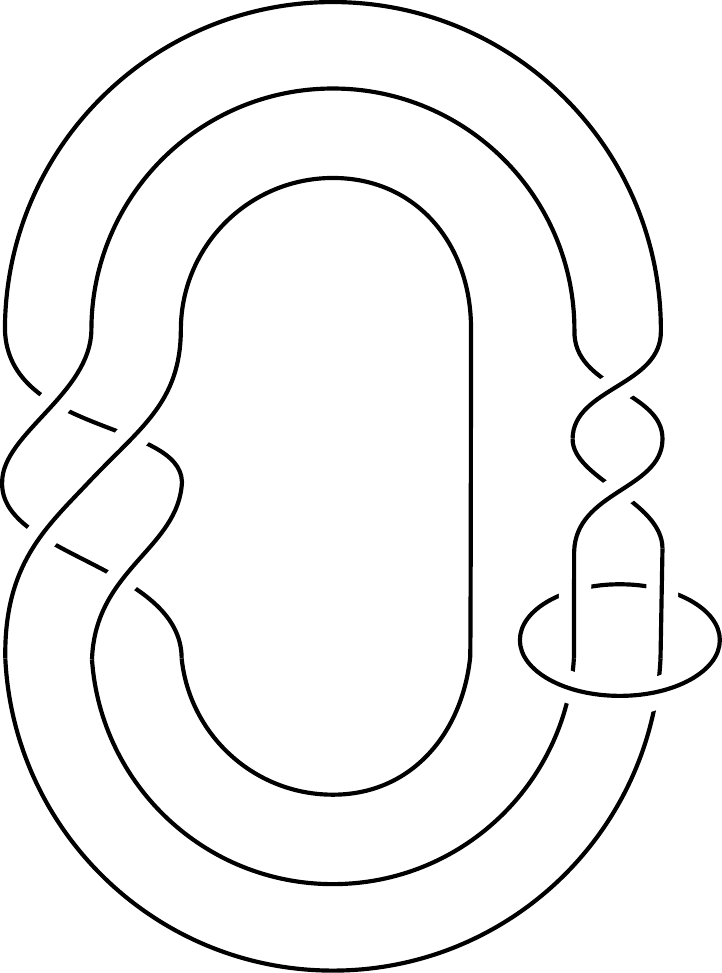
    \end{center}
    \caption{An example of a link $L_s$.  There is a horizontal plane
      which is a bridge sphere $\Sigma$ for $K_s$ containing $C_s$;
      by perturbing $C_s$ slightly, we see that $|\Sigma\cap L_s| = 2p
      + 2$.}
    \label{fig:ttk-link}
  \end{figure}

  By~\autoref*{hyperbolic-twists}, the exterior $M_{L_s} = M_L$ is
  hyperbolic, and thus we can apply Theorem 1.1 of \cite{Lee07}, which
  asserts that $K_s$ is hyperbolic whenever $|s| > 5$.  Suppose $|s| >
  5$.  In this case, the exterior of $K_s$ is then irreducible,
  $\boundary$-irreducible, and anannular.
  
  Let $\Sigma_s$ be a minimal bridge sphere for $K_s$ minimizing the
  pair $(|\Sigma_s \cap K_s|, |\Sigma_s \cap C_s|)$
  lexicographically, so that $\Sigma_s$ is not perturbed. If $\Sigma_s$ is cancellable, then $C_s$ can be
  isotoped onto $\Sigma_s$, as $K_s$ is not the unknot.  As before,
  performing $-1/s$ surgery on $C_s$ is equivalent to performing
  $-s$ twists along $C_s \subset \Sigma_s$. After twisting, we see
  that $\Sigma_s$ is a bridge sphere for $K$ and so $p = b_0(K) \leq
  b_0(K_s)$. Hence, if $\Sigma_s$ is cancellable, $b_0(K_s) = p$.
 
  Suppose, therefore, that $\Sigma_s$ is not cancellable.
  By~\autoref*{weakly-incompressible}, there is a strongly irreducible
  surface $\Sigma_s'$ such that $\Sigma_s'$ satisfies $2 \leq
  |\Sigma_s' \cap C_s| \leq |\Sigma_s \cap C_s|$ and $|\Sigma_s' \cap
  K_s| \leq |\Sigma_s \cap K_s| \leq 2p$, intersects $G$ in a
  non-empty collection of arcs essential on both surfaces, and
  minimizes $|\Sigma'_s \cap G \cap \pd M_L|$ up to isotopy of
  $\Sigma'_s$.  Each component of $\Sigma_s' \cap T_C$ intersects each
  component of $G \cap T_C$ in exactly $|s|$ points, and since $|G
  \cap T_C| = 2$, it follows that
  \[|\Sigma_s' \cap G \cap T_C| = 2|s| \cdot |\Sigma_s' \cap C_s|.\]
  Thus, $\Sigma_s' \cap G$ contains at least $|s| \cdot |\Sigma_s'
  \cap C_s|$ arcs of intersection.  By~\autoref*{arc-annulus-bound},
  \begin{align*}
    |s| \cdot |\Sigma_s' \cap C_s|&\leq 9\chi(\Sigma_{s}' \cap M_{L})\chi(G)\\
    & \leq 18(|\Sigma'_s \cap L_s| - 2) \\
    & \leq 18(|\Sigma'_s \cap K_s| + |\Sigma'_s \cap C_s| - 2) \\
    & \leq 36p + 18|\Sigma'_s \cap C_s| - 36.
  \end{align*} 

  Consequently,
  \[ |s| \leq \frac{36p + 18|\Sigma'_s \cap C_s| - 36}{|\Sigma'_s \cap
    C_s|} = \frac{36p - 36}{|\Sigma_s' \cap C_s|} + 18 \leq 18p. \]
We conclude that if $|s| > 18p$, then $\Sigma_s$ is cancellable and $b_0(K_s) = p$, as desired.
\end{proof}

\section{Construction of the knots}\label{section:construction}
In this section, we construct collections $\set{K^n}$ of twisted torus
knots by twisting one curve $\alpha$ about another curve $\beta$ on a
genus two Heegaard surface $\Sigma$ for $S^3$.  We establish
properties of an associated link exterior in order to use the
machinery developed in~\cite{BakerGordonLuecke13}.  In~\autoref{section:main} we will
use this machinery to show that the knots $\set{K^n}$ have unbounded
genus one bridge number.


The knot $K =T(p,q,r,s)$ has a natural position on $\Sigma$; hence
$b_2(K) = 1$.  See~\autoref*{fig:ttk-genus2} (a picture of the knotted
component of~\autoref*{fig:ttk-link}).

\begin{figure}[h!tb]
  \begin{center}
    \def\svgwidth{0.6\textwidth}
    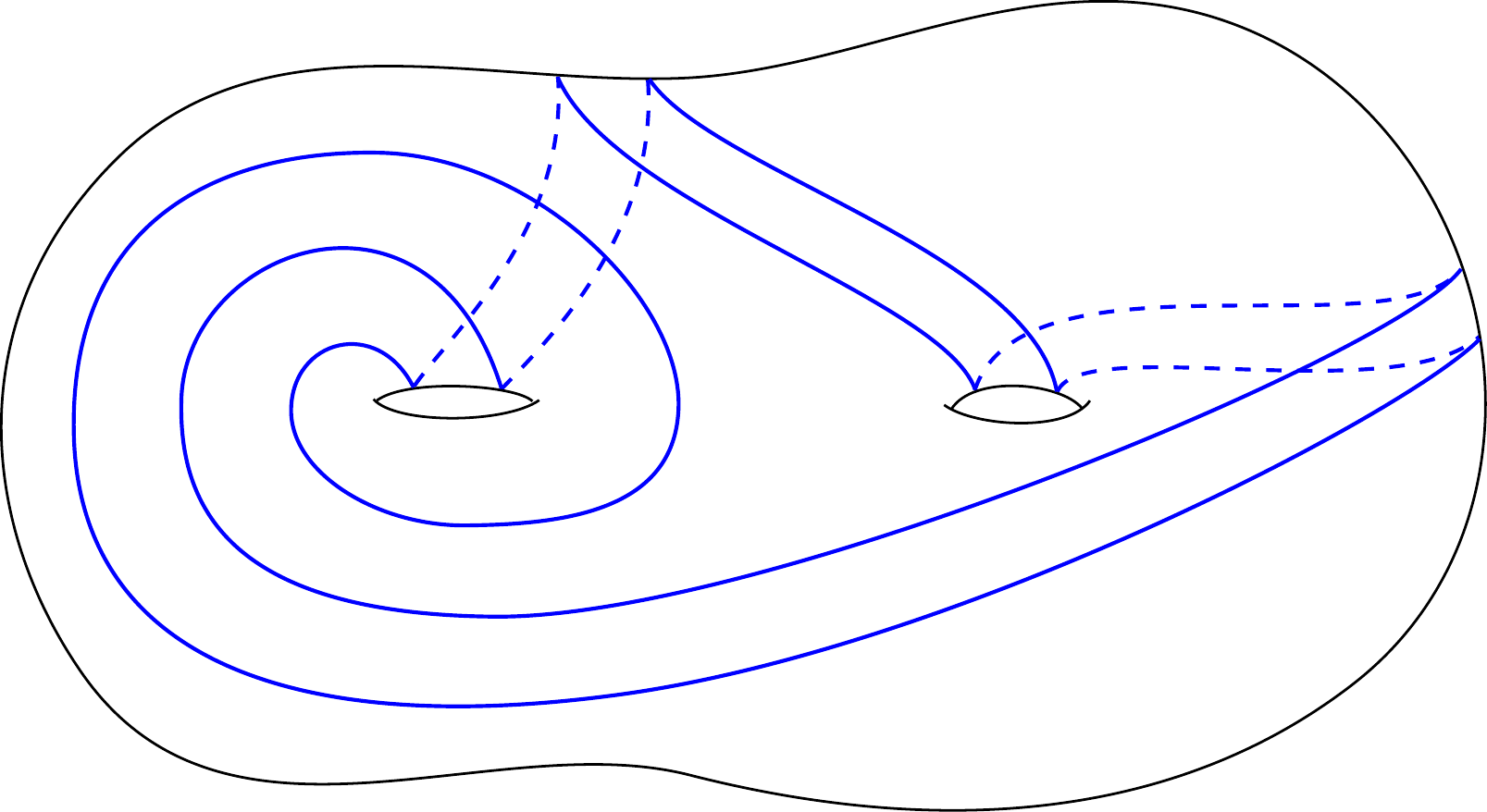
  \end{center}
  \caption{The twisted torus knot $T(3, 2, 2, 1)$ and its position on
    a genus two splitting of $S^3$.}
  \label{fig:ttk-genus2}
\end{figure}

Observe that $K$ meets an obvious disk system for the inside
handlebody $|p|$ and $|r|$ times, respectively, and meets an obvious
disk system for the outside handlebody $|q|$ and $|rs|$ times,
respectively.  Define the \defn{surface slope} of $K$ with respect to
$\Sigma$ to be the isotopy class of $\boundary N(K)\cap \Sigma$ in
$\pd N(K)$. Dean computed the surface slope of $T(p, q, r, s)$:

\begin{lemma}[\cite{Dean03}, Proposition 3.1]\label{sfce-slope}
  The surface slope of $T(p,q,r,s)$ is $pq+r^2s$.
\end{lemma}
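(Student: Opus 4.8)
The plan is to compute the framing induced on $K = T(p,q,r,s)$ by the genus two Heegaard surface $\Sigma$ by tracking how the surface slope changes under the Dehn twist that produces $K$ from the torus knot $T_{p,q}$. I would first recall the base case: the $(p,q)$ torus knot $T_{p,q}$ lies on a Heegaard \emph{torus} $T$, and the surface slope of $T_{p,q}$ with respect to $T$ is the classical torus-knot framing $pq$ — this follows because two parallel copies of $T_{p,q}$ on $T$ are disjoint curves of type $(p,q)$, and their linking number in $S^3$ is $pq$. Equivalently, $[\partial N(K) \cap T] = pq\,[\mu] + [\lambda]$ in $H_1(\partial N(K))$.

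Next I would account for the twisting. By definition $K = T(p,q,r,s)$ is obtained by performing $-1/s$ surgery on the twisting circle $C$, which bounds a disk $D$ meeting $K$ in $r$ points of the same sign; this surgery is realized by $s$ full Dehn twists along $D$ applied to the $r$ strands of $K$ passing through $D$. The genus two surface $\Sigma$ is, up to isotopy, obtained from $T$ by the same twisting operation (thinking of $C$ as lying on $\Sigma$ as in the construction, or by tubing $T$ along $C$). Under $s$ full twists on $r$ coherently-oriented strands, a longitude of $K$ on the surface picks up algebraic self-intersection changes: the linking number of two parallel surface-framed copies of $K$ increases by $s \cdot r^2$, since each of the $r$ strands of one copy links each of the $r$ strands of the other copy an extra $s$ times (the sign condition guarantees all contributions add). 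The key homological computation is therefore that the new surface slope is $pq + r^2 s$, and the cleanest way to carry it out is to compute the linking number $\mathrm{lk}(K, K')$ where $K'$ is a push-off of $K$ along $\Sigma$, using the disk $D$: $K'$ meets $D$ in $r$ points as well, and the $-1/s$ surgery on $C$ contributes $-s$ to $\mathrm{lk}(C, \cdot)$-type terms, yielding the additional $r^2 s$ over the base value $pq$.

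The main obstacle — and the step I would be most careful about — is fixing all the sign and orientation conventions so that the correction term comes out as $+r^2 s$ rather than $\pm r^2 s$ or $r^2 s \pm$ (lower order): one must pin down the orientation of $C$, the sign of the surgery coefficient $-1/s$ versus $+1/s$, the direction of the push-off, and the fact that the $r$ intersection points of $K$ with $D$ all have the \emph{same} sign (so there is no cancellation and the count is genuinely $r^2$, not something smaller). Since Dean \cite{Dean03} has already done this computation in his Proposition 3.1, I would either cite it directly (as the statement does) or, for a self-contained argument, verify it in the smallest nontrivial example — e.g.\ $T(3,2,2,1)$ as in Figure~\ref{fig:ttk-genus2}, where $pq + r^2 s = 6 + 4 = 10$ — and then observe that the general formula follows because both $pq$ and the twisting correction $r^2 s$ are manifestly linear/bilinear in the relevant linking data. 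No deep input is needed beyond linking-number bookkeeping; the content is entirely in the conventions.
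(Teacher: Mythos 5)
The paper gives no proof of this lemma at all --- it is quoted directly from Dean's Proposition 3.1 --- and your sketch reconstructs essentially the argument of that cited source: the surface slope is the linking number of $K$ with its push-off along $\Sigma$, which contributes $pq$ from the $(p,q)$ torus-braid part and $r^2s$ from the $s$ full twists on the $r$ coherently oriented strands (no cancellation since all intersections with the twisting disk have the same sign). This is correct in outline; the only vague point --- exactly how the genus two surface $\Sigma$ coincides with the Heegaard torus $T$ away from the twist region so that the two framing contributions add --- is routine to make precise, and citing \cite{Dean03} as the statement itself does is entirely appropriate.
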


Now consider two curves $\alpha$ and $\beta$ on a genus two Heegaard
surface $\Sigma$ for $S^3$.  Choose $\A$ to be a knot of type $T(a,b,0,0)$, without additional restrictions.  This definition requires that $\gcd(a,b)=1$, but $\A$ may be unknotted in $S^3$.  Choose $\beta$ to be a hyperbolic knot or a nontrivial torus knot of type $T(c,d,e,f)$, where $\gcd(c,d)=1$, $1<e<c<d$, $f\neq 0$, and $|ad-bc| \neq 0$.  If $\beta$ is a torus knot of type $(x,y)$, we require two additional properties:

\begin{itemize}
  \item the knot types of $\alpha$ and $\beta$ are different when $\alpha$ and $\beta$ are considered as knots in
    $S^3$, and 
  \item the surface slope of $\beta$ with respect to $\Sigma$ is
    different from $xy$.
\end{itemize}

These requirements may seem burdensome, but they are not difficult to
satisfy in practice.  For example, the knots $T(c, d, 1, f)$, $f\neq
0$, are torus knots of type $(c, d)$ whose surface slope differs from
$cd$ by~\autoref*{sfce-slope}.  Our construction of Berge and Dean
knots below uses this form for $\beta$.
Moreover,~\autoref*{hyperbolic-twists} shows that most twisted torus
knots are hyperbolic, so we do not need to worry about knot types or
surface slopes in this case.

Note that after orienting $\alpha$ and $\beta$, we may isotope them to meet in $\Delta = |ad-bc|$ points of the same intersection sign.  See~\autoref*{fig:alpha-beta} for an example in which $\alpha$ is $T(1, 1, 0, 0)$ and $\beta$ is $T(2, 3, 1, 1)$.

\begin{figure}[h!tb]
  \begin{center}
    \def\svgwidth{0.7\textwidth}
    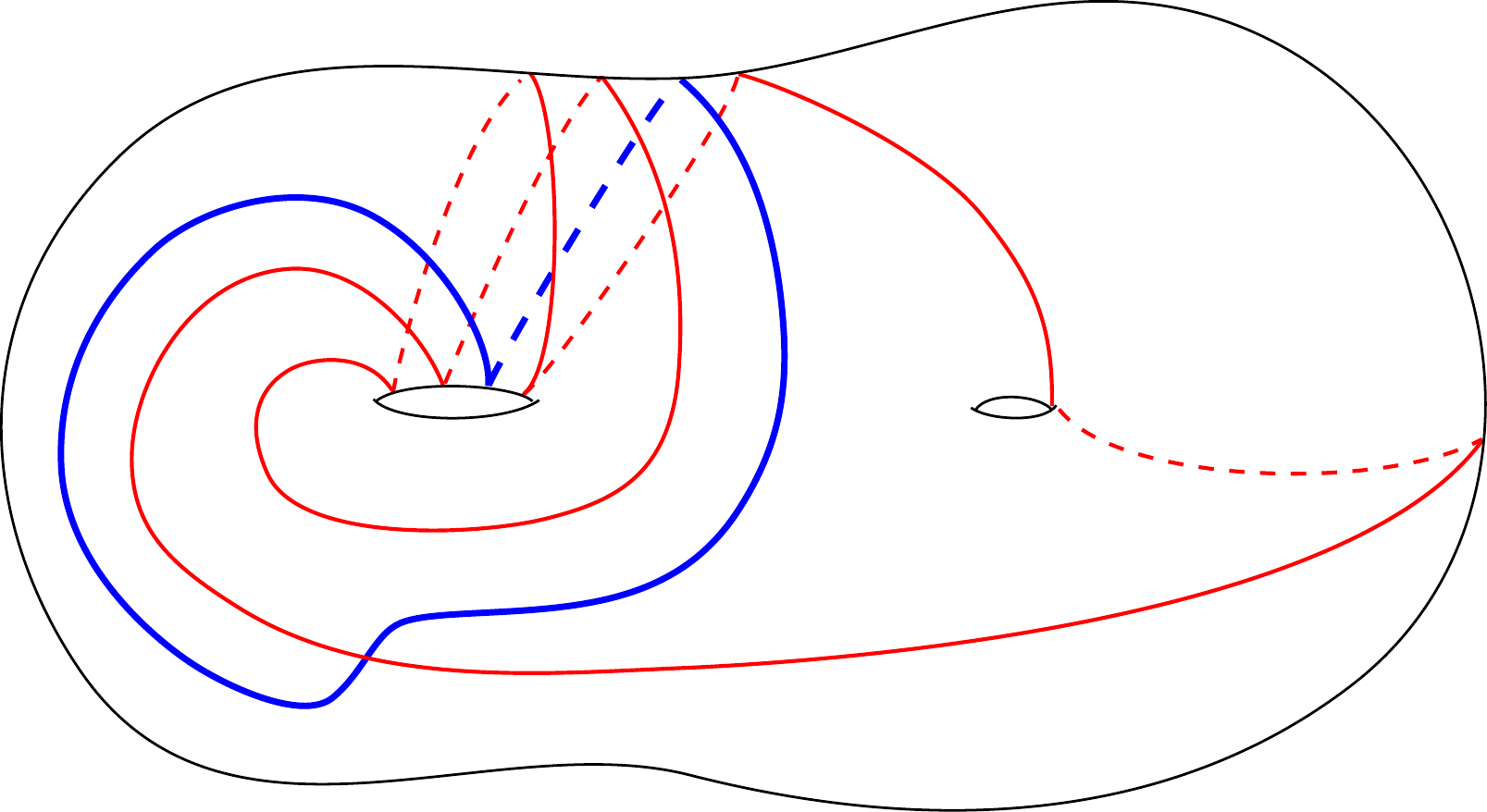
  \end{center}
  \caption{The curves $\alpha$ and $\beta$ on $\Sigma$.  Here $(a,b)=(1,1)$, $(c,d,e,f)=(2,3,1,1)$.}
  \label{fig:alpha-beta}
\end{figure}

Dehn twisting $\alpha$ along $\beta$ results in
the knot $T(a + n\Delta c, b + n\Delta d, n\Delta e, f)$ where $n$ is the
number of twists.  If we twist $\alpha$ along $\beta$ to the left $n$
times in~\autoref*{fig:alpha-beta}, we obtain the knot $T(2n+1, 3n+2,
n, 1)$ since in this case $\Delta=1$.  

Now let $K=\alpha$, and let $L_1$ and $L_2$ be two copies of $\beta$
pushed along the positive and negative normal directions,
respectively, of $\Sigma$.  Then $L_1\cup L_2$ bounds an annulus
$\Rhat$ which meets $K$ exactly $\Delta$ times. Let $\L$ denote the
link $L_1\cup L_2\cup K$, and let $T_1$, $T_2$, and $T_K$ be the
boundary components of $M_{\L}$ arising from $L_1$, $L_2$, and $K$,
respectively.  Let $R=\Rhat\cap M_{L_1\cup L_2}$ and note that the
slope of $R$ on $T_1$ and $T_2$ is the surface slope of $\Sigma$ on
$\beta$.

By orienting $M$ and $\Rhat$, we obtain orientations on $L_1$, $L_2$,
and their meridians $\mu_1$ and $\mu_2$.  With respect to coordinates
on $T_1$ and $T_2$ given by $\mu_1$, $\mu_2$, and $\boundary R$,
performing $1/n$ surgery on $L_1$ and $-1/n$ surgery on $L_2$ has the
effect of ``twisting $K$ along the annulus $\Rhat$,'' an operation
described in~\cite[Definition 1.1]{BakerGordonLuecke13}.  The resulting knot
$K^n$ is the same knot obtained by twisting $\alpha$ along $\beta$ in
the appropriate direction $n$ times.

We first show that for large enough $n$, the knots $K^n$ satisfy the
hypotheses of~\autoref{yoav}.

\begin{lemma}\label{parameter-inequalities}
  Let $p=a + n\Delta c$, $q=b + n\Delta d$, $r=n\Delta e$, and $s=f$ so
  that $K^n=T(p,q,r,s)$.  For sufficiently large $n$,
  \[ 1 < r < p < q. \]
\end{lemma}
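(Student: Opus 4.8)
The plan is to verify each of the three inequalities $1 < r$, $r < p$, and $p < q$ separately, using the standing hypotheses on the parameters $(a,b)$ and $(c,d,e,f)$ together with the fact that $\Delta = |ad-bc| \geq 1$. Recall that $r = n\Delta e$, $p = a + n\Delta c$, and $q = b + n\Delta d$, and that we are free to take $n$ as large as we like.

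First I would dispose of $1 < r$: since $\beta$ is either hyperbolic or a nontrivial torus knot of type $T(c,d,e,f)$ with $1 < e < c < d$, we have $e \geq 2$ and $\Delta \geq 1$, so $r = n\Delta e \geq 2n \geq 2 > 1$ as soon as $n \geq 1$. (If $\beta$ is merely required to be hyperbolic, the same bound $1 < e$ is part of the hypotheses imposed on the tuple $(c,d,e,f)$.) Next, for $r < p$, observe that $p - r = a + n\Delta(c - e)$; since $c > e$ we have $c - e \geq 1$, so $p - r = a + n\Delta(c-e) \to +\infty$ as $n \to \infty$, and in particular $p - r > 0$ for all sufficiently large $n$. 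Finally, for $p < q$, note that $q - p = (b - a) + n\Delta(d - c)$; since $d > c$ we have $d - c \geq 1$, so again $q - p \to +\infty$ and hence $q - p > 0$ for $n$ large. Taking $n$ larger than the finitely many thresholds produced above gives $1 < r < p < q$ simultaneously.

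The only subtlety — and the reason the statement is phrased with ``for sufficiently large $n$'' rather than ``for all $n$'' — is that the constants $a$ and $b - a$ may be negative or zero, so the inequalities $r < p$ and $p < q$ need not hold for small $n$; what rescues us is that the leading coefficients $n\Delta(c-e)$ and $n\Delta(d-c)$ are strictly positive and grow linearly, which dominates the bounded additive error for large $n$. I expect no real obstacle here: the argument is an elementary estimate once one unpacks the definitions and invokes the inequalities $1 < e < c < d$ and $\Delta \geq 1$ that are built into the construction. One should perhaps remark that $\gcd(p,q) = \gcd(a + n\Delta c,\, b + n\Delta d)$ need not equal $1$ for every $n$, but this is not part of the present claim and is handled elsewhere; here we only need the size inequalities.
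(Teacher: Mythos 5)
Your proposal is correct and follows essentially the same route as the paper: both arguments are elementary estimates exploiting $1<e<c<d$ and $\Delta\geq 1$, so that $p-r$ and $q-p$ are linear in $n$ with positive leading coefficient $\Delta(c-e)$, resp.\ $\Delta(d-c)$, and hence eventually positive. The paper phrases this by rewriting $r=n\Delta c+n\Delta(e-c)$ and $p=a+n\Delta d+n\Delta(c-d)$ rather than by computing the differences directly, but this is only a cosmetic difference.
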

\begin{proof}
  Clearly $r>1$ when $n$ is large.  We also have
  \begin{align*}
    a + n\Delta c &= a + n\Delta d + n\Delta (c-d) \\
    &< b + n\Delta d
  \end{align*}
  for large $n$ since $c-d<0$.

  In addition,
  \begin{align*}
    n\Delta e &= n\Delta c + n\Delta (e-c)\\
    &< a + n\Delta c
  \end{align*}
  for large $n$ since $e-c<0$.
\end{proof}

Next, we exhibit a \defn{catching surface} for $(\Rhat, K)$.  The
precise definition is given in~\cite{BakerGordonLuecke13}.  For our purposes an
orientable, connected, properly embedded surface $Q\subset M_{\L}$
catches $(\Rhat, K)$ if $\chi(Q)<0$, $\boundary Q\cap T_i$ is a
nonempty collection of coherently oriented parallel curves on $T_i$,
and $\boundary Q$ intersects $T_i$ in slopes different from $\boundary
R$, $i=1,2$.

\begin{lemma}\label{catching-sfce}
  The pair $(\Rhat, K)$ is caught by a surface $Q$ with
  \[ \chi(Q) = 1-(\abs{a}-1)(\abs{b}-1)-\abs{bc}-\abs{ad}. \]
  Furthermore, $\boundary Q$ is meridional on $T_1$ and $T_2$ and
  meets a meridian of $T_K$ exactly once.
\end{lemma}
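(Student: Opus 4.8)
The plan is to build $Q$ as the restriction to $M_{\L}$ of a minimal genus Seifert surface for $K=\alpha$, chosen to meet $L_1$ and $L_2$ as efficiently as possible.

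First I would record the relevant linking numbers. The curves $\alpha = T(a,b,0,0)$ and $\beta = T(c,d,e,f)$ both lie on $\Sigma$, and their intersection numbers with the two disk systems are prescribed by these types: $\alpha$ meets them $a,0$ (inside) and $b,0$ (outside) times, while $\beta$ meets them $c,e$ and $d,ef$ times. Pushing $K$ into one of the handlebodies bounded by $\Sigma$ and $\beta$ into the other, and using that the linking pairing $H_1(V)\times H_1(W)\to\mathbb{Z}$ is unimodular, one computes (up to sign, and after possibly relabelling $L_1\leftrightarrow L_2$)
\[ \operatorname{lk}(K,L_1)=bc,\qquad \operatorname{lk}(K,L_2)=ad; \]
as a check, $\operatorname{lk}(K,L_1)-\operatorname{lk}(K,L_2)=\pm(bc-ad)$, which is the algebraic intersection number $\pm\Delta$ of $\alpha$ and $\beta$ on $\Sigma$. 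Both linking numbers are nonzero, since $c,d>1$ and $a,b\neq 0$.

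Next let $F$ be a minimal genus Seifert surface for the torus knot $K$; since torus knots are fibered, $F$ is connected and orientable with $\chi(F)=1-(\abs{a}-1)(\abs{b}-1)$. The heart of the proof is to isotope $F$ in $S^3$, rel $\boundary F=K$, so that $F$ meets $L_1$ transversely in exactly $\abs{bc}$ points and $L_2$ in exactly $\abs{ad}$ points --- that is, so that the geometric intersection numbers equal the (homologically forced) linking numbers. This is where the specific position of $\alpha$, $\beta$, and $F$ relative to $\Sigma$ is used, not merely their homology classes: the standard Seifert surface of a torus knot is assembled from parallel copies of a meridian disk of one of the solid tori together with twisted bands lying in a collar of the Heegaard surface, and one can push the strands of $L_1$ and $L_2=\beta_{\pm}$ off $\Sigma$ and across these disks and bands to remove every oppositely-signed pair of intersection points. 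I expect this to be the main obstacle: in general a Seifert surface cannot be isotoped to meet a given curve in the minimal number of points, so one has to exploit the concrete picture on $\Sigma$.

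Finally set $Q=F\cap M_{\L}=F\setminus N(L_1\cup L_2)$. Removing $\abs{bc}+\abs{ad}$ open disks from the interior of the connected, orientable surface $F$ yields a connected, orientable, properly embedded surface in $M_{\L}$ with
\[ \chi(Q)=\chi(F)-\abs{bc}-\abs{ad}=1-(\abs{a}-1)(\abs{b}-1)-\abs{bc}-\abs{ad}, \]
and $\chi(Q)<0$ follows directly from the parameter constraints (for instance, if $\min(\abs{a},\abs{b})\geq 2$ then $(\abs{a}-1)(\abs{b}-1)\geq 1$ while $\abs{bc}\geq 4$; if $\abs{a}=1$ then $\chi(Q)=1-\abs{bc}-\abs{d}\leq 1-2-3<0$, and symmetrically if $\abs{b}=1$). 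By construction $\boundary Q\cap T_i$ consists of $\abs{bc}$, respectively $\abs{ad}$, meridians of $L_i$, all coherently oriented because minimal intersection forbids oppositely-signed pairs; thus $\boundary Q$ is meridional on $T_1$ and $T_2$, and these meridional slopes differ from the slope of $\boundary R$, which is the surface slope of $\beta$ with respect to $\Sigma$. Moreover $\boundary Q\cap T_K=\boundary F=K$ is a single longitude, so it meets a meridian of $T_K$ exactly once. Hence $Q$ catches $(\Rhat,K)$ and has the stated Euler characteristic, completing the proof.
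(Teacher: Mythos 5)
Your skeleton is the same as the paper's: take a minimal genus Seifert surface for the torus knot $K$, puncture it along its intersections with $L_1\cup L_2$, and read off $\chi(Q)=1-(\abs{a}-1)(\abs{b}-1)-\abs{ad}-\abs{bc}$, meridional boundary on $T_1$ and $T_2$, and a single longitudinal component on $T_K$. The linking-number bookkeeping (with labels swapped, as you note), the observation that coherently oriented meridians have slope different from $\pd R$, and the check that $\chi(Q)<0$ are all fine.

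The problem is the step you yourself flag as the main obstacle: producing a position in which the Seifert surface meets $L_1$ and $L_2$ in exactly $\abs{bc}$ and $\abs{ad}$ points, all of the same sign. That is the actual content of the lemma, and your proposal only gestures at it (``push the strands across these disks and bands to remove every oppositely-signed pair''); the homological computation of $\operatorname{lk}(K,L_i)$ gives a lower bound on the geometric count but does not supply the coherent minimal position, and your choice of surface (meridian disks on one side of the torus only, with all bands in a collar) makes both $L_i$ interact with the same disks and bands, so the cancellation you would need is not evident. The paper avoids any cancellation argument by two explicit choices: first, it observes that the twist parameter $f$ of $\beta$ has no effect on $\abs{L_i\cap Q}$, so one may take $f=0$ and regard $L_1$ and $L_2$ as pushoffs of $(c,d)$ curves on the Heegaard torus $T$ into \emph{opposite} solid tori; second, it uses the symmetric Seifert surface built from $\abs{a}$ meridian disks on one side of $T$, $\abs{b}$ disks on the other, and $\abs{ab}$ bands. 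Then $L_1$ meets only the $\abs{a}$ disks on its side, $\abs{d}$ times each, and $L_2$ meets only the $\abs{b}$ disks on its side, $\abs{c}$ times each, with all intersections automatically of one sign. Your write-up needs this (or an equivalent) explicit positioning argument, including the reduction to $f=0$, which your sketch does not address even though for $f\neq 0$ the curves $L_i$ are not pushoffs of curves on a Heegaard torus and the disk-by-disk count is not immediate.
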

\begin{proof}
  We may consider $K$ as lying in a Heegaard torus $T$ for $S^3$.  The
  standard Seifert surface $Q$ for $K$ can be constructed by taking
  $\abs{a}$ disks on one side of $T$, $\abs{b}$ disks on the other
  side, and banding them together with $\abs{ab}$ bands.  Such a
  surface has genus $\frac{1}{2}(\abs{a}-1)(\abs{b}-1)$, so we must
  determine how many times $L_1$ and $L_2$ meet $Q$.

  Observe that the value of $f$ has no effect on $|L_i \cap Q|$ for
  $i=1,2$.  Thus we need only compute $|L_i \cap Q|$ when $f = 0$, and
  in this case we consider $L_i$ to be pushoffs of $(c,d)$ curves
  contained in $T$.

  We may arrange that $L_1$ and $L_2$ meet only the disks on their
  respective sides, and all with the same sign of intersection.  On
  one side, we see that $L_1$ meets $\abs{a}$ disks $\abs{d}$ times
  each.  On the other side, $L_2$ meets $\abs{b}$ disks $\abs{c}$
  times each.  This gives the claimed Euler characteristic, and
  $\boundary Q$ is meridional on $T_1$ and $T_2$.
\end{proof}

An example (with $K$ the unknot) appears
in~\autoref*{fig:linking}.  Performing $-1$ surgery on the unknotted
curve at right and twisting along the annulus bounded by $L_1\cup L_2$
gives the knots in~\autoref*{fig:alpha-beta}.  In this case, the
catching surface is a planar surface with one longitudinal boundary
component on $T_K$ and 5 meridional boundary components on $T_1\cup
T_2$.

\begin{figure}[h!tb]
  \begin{center}
    \def\svgwidth{0.9\textwidth}
    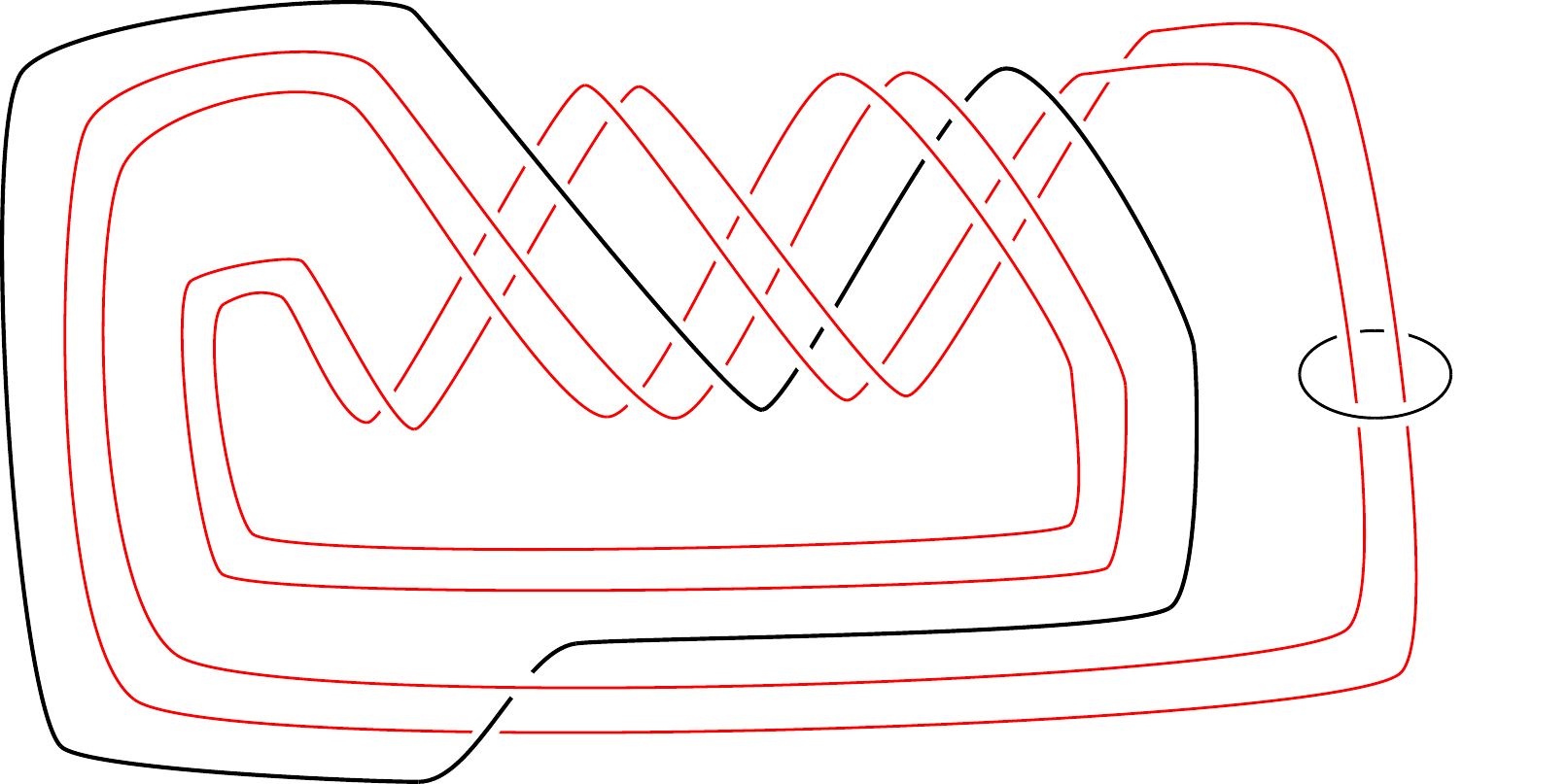
  \end{center}
  \caption{The link $K\cup L_1\cup L_2$ corresponding
    to~\autoref*{fig:alpha-beta}.}
  \label{fig:linking}
\end{figure}

The remainder of this section is devoted to establishing several
topological properties about $\Rhat$ and the manifolds $M_{L_1 \cup
  L_2}$ and $M_{\L}$ in order to employ the tools of~\cite{BakerGordonLuecke13}.

\begin{lemma}\label{annulus-not-isotopic}
  The annulus $\Rhat$ is not isotopic into a genus one splitting of
  $S^3$.
\end{lemma}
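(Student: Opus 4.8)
The plan is to argue by contradiction. Suppose $\Rhat$ is isotopic into a genus one Heegaard surface of $S^3$. Since any two genus one Heegaard surfaces of $S^3$ are ambient isotopic, we may assume, after an ambient isotopy, that $\Rhat$ lies in a fixed Heegaard torus $T$. By construction $\Rhat$ is obtained from $\beta\subset\Sigma$ by pushing $\beta$ off $\Sigma$ to both sides, so the core of $\Rhat$ is isotopic in $S^3$ to $\beta$; in particular $\beta$ is isotopic to a simple closed curve lying on $T$. A simple closed curve on a Heegaard torus of $S^3$ is either unknotted or a torus knot, so because $\beta$ was chosen to be hyperbolic or a nontrivial torus knot, the hyperbolic case is ruled out at once and we may assume $\beta$ is a nontrivial torus knot, say of type $(x,y)$.

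Next I would compute the slope of $\partial\Rhat$ on $\partial N(\beta)$ in two ways and compare. On one hand, by the construction the slope of $\Rhat$ on $T_1$ and $T_2$ is the surface slope of $\beta$ with respect to $\Sigma$; since $\beta$ has type $T(c,d,e,f)$, \autoref*{sfce-slope} identifies this slope with $cd + e^2 f$. On the other hand, once $\Rhat \subset T$, its core is an essential curve on $T$ (it cannot be inessential, as $\beta$ is nontrivial) which is isotopic to $\beta$, and $\Rhat$ is a regular neighborhood in $T$ of that curve. Hence both components of $\partial\Rhat$ are parallel copies of $\beta$ on $T$, so $\partial\Rhat$ carries the surface framing induced on $\beta$ by the Heegaard torus $T$. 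For a nontrivial torus knot this framing is the well-defined slope $xy$: the annulus $T\setminus N(\beta)$ is a vertical annulus in the Seifert fibering of the torus knot exterior $M_\beta$, so its boundary slope is the slope of a regular fiber, namely $xy$; equivalently, the classification of torus knots as curves on the standard genus one surface shows that the unordered pair $\{|x|,|y|\}$, hence the product $xy$, is determined by the knot type. Comparing the two computations, $cd + e^2 f = xy$.

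Finally, I would invoke the hypothesis built into the construction of $\beta$: when $\beta$ is a torus knot of type $(x,y)$, its surface slope with respect to $\Sigma$ is required to differ from $xy$. This contradicts $cd + e^2 f = xy$, completing the proof.

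The one step requiring genuine care is the identification of the surface framing of a nontrivial torus knot on a Heegaard torus with the well-defined integer $xy$; I expect this to be the crux, though it is a standard fact, following from the Seifert fibered structure of the torus knot exterior (alternatively from the symmetries of $(S^3,T)$, which act on $H_1(T)$ preserving the product of coordinates).
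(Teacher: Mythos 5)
Your proof is correct and follows essentially the same route as the paper: rule out hyperbolic $\beta$ since such a knot cannot lie on a Heegaard torus, and in the torus knot case compare the framing induced on the core of $\Rhat$ (the surface slope of $\beta$ with respect to $\Sigma$) with the Heegaard-torus framing $xy$, which the construction's hypothesis forbids from agreeing. You merely make explicit two steps the paper leaves implicit (that an annulus in the Heegaard torus induces the torus framing on its core, and that this framing is the well-defined slope $xy$ via the cabling annulus), which is a fine elaboration rather than a different argument.
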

\begin{proof}
  We will show that $\beta$, the core of $\Rhat$, is not isotopic into a
  genus one splitting of $S^3$ such that the surface slope of $\beta$
  with respect to this splitting matches the surface slope of $\beta$
  with respect to $\Sigma$.

  Recall that $\beta$ is knotted by hypothesis.  If $\beta$ is a torus
  knot of type $(x,y)$, then our additional hypothesis that $xy$
  differ from the surface slope of $\beta$ with respect to $\Sigma$
  ensures that the surface slopes differ.

  If $\beta$ is not a torus knot, then the core of $\Rhat$ is not
  isotopic into a genus one splitting for $S^3$, and so the conclusion
  certainly holds.
\end{proof}

\begin{lemma}\label{M-irreducible}
  The spaces $M_{L_1\cup L_2}$ and $M_{\L}$ are irreducible.  
\end{lemma}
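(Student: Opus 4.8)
The plan is to reduce irreducibility of $M_{L_1\cup L_2}$ and $M_{\L}$ to the fact that $\beta$ (the core of $\Rhat$) is a nontrivial knot in $S^3$, together with the standard fact that a knot exterior in $S^3$ is irreducible. First I would handle $M_{L_1\cup L_2}$. The link $L_1\cup L_2$ consists of two parallel copies of $\beta$ cobounding the annulus $\Rhat$; since $\beta$ is knotted (hyperbolic or a nontrivial torus knot by hypothesis), $L_1\cup L_2$ is a nonsplit link. One way to see this cleanly: $L_1\cup L_2$ is the boundary of an annulus whose core is a nontrivial knot, so $L_1\cup L_2$ is not the $2$-component unlink, and in fact any $2$-sphere in $M_{L_1\cup L_2}$ must bound a ball. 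I would argue directly: suppose $S\subset M_{L_1\cup L_2}$ is an essential $2$-sphere. Since $M_{L_1}$ (the exterior of the single knot $\beta$) is irreducible, $S$ bounds a ball $B$ in $M_{L_1}$; if $B$ contains $N(L_2)$ this forces $L_2$, hence $\beta$, to be unknotted inside $B\subset S^3$, a contradiction, and if $B$ misses $N(L_2)$ then $S$ already bounds a ball in $M_{L_1\cup L_2}$. Hence $M_{L_1\cup L_2}$ is irreducible.

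For $M_{\L}$, recall $\L = L_1\cup L_2\cup K$ where $K=\alpha$. I would apply the same strategy one more level down: given an essential $2$-sphere $S$ in $M_{\L}\subset M_{L_1\cup L_2}$, use irreducibility of $M_{L_1\cup L_2}$ just established to see that $S$ bounds a ball $B$ in $M_{L_1\cup L_2}$; then either $B$ is disjoint from $N(K)$, so $S$ bounds a ball in $M_{\L}$ and we are done, or $B\supset N(K)$, in which case $K$ lies in a ball disjoint from $L_1\cup L_2$. The latter would contradict the fact that $\Rhat$ meets $K$ in $\Delta = |ad-bc|\neq 0$ points geometrically — equivalently, $K$ has nonzero algebraic linking number with $L_1$ (and $L_2$), since $L_1$ and $L_2$ are pushoffs of $\beta$ and $K$ crosses the spanning annulus $\Rhat$ essentially. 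A knot contained in a ball disjoint from $L_1\cup L_2$ would have linking number zero with each $L_i$, so this case cannot occur. Therefore $M_{\L}$ is irreducible.

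The main obstacle is making the linking/intersection argument airtight: I need that the $\Delta$ intersection points of $\Rhat$ with $K$ are coherently signed (which is exactly how $\alpha$ and $\beta$ were isotoped on $\Sigma$, so $\mathrm{lk}(K,L_i)=\pm\Delta\neq 0$) and that this linking number is an obstruction to $K$ lying in a ball in the complement of $L_1\cup L_2$. Both are standard, but care is needed because $L_1$ and $L_2$ are not arbitrary — they are the two normal pushoffs of $\beta$, and $\Rhat$ is precisely the annulus between them, so $\Rhat\cap K$ computes $\mathrm{lk}(K,L_1)=\mathrm{lk}(K,L_2)$ up to sign. A minor alternative, if one wants to avoid linking numbers entirely: observe that if $S$ bounds a ball $B$ containing $N(K)$ but not $N(L_i)$, then the annulus $\Rhat$, which has both boundary circles outside $B$ yet passes through $B$ at least $\Delta\geq 1$ times, would have to meet $S=\partial B$ — but $S$ is disjoint from $\L\supset L_1\cup L_2=\partial\Rhat$, and an innermost-disk argument on $\Rhat\cap S$ (using incompressibility of $\Rhat$'s complement pieces is not needed; just that $S$ is a sphere) would let us surger $S$ to remove these intersections and conclude $\Rhat$ misses $B$, contradicting $\Rhat\cap K\neq\emptyset$. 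Either route works; I expect to present the linking-number version as it is shortest.
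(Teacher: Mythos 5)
There is a genuine gap in your first step. From irreducibility of $M_{L_1}$ you get a ball $B\subset M_{L_1}$ bounded by the reducing sphere, and in the bad case $N(L_2)\subset B$; but your claim that this ``forces $L_2$, hence $\beta$, to be unknotted inside $B$'' is a false implication --- a knotted curve can perfectly well lie in a ball disjoint from another curve (that is exactly what happens in any split link with knotted components). What this case actually gives is that $L_1\cup L_2$ is a \emph{split} link, and ruling that out is the real content of the lemma for $M_{L_1\cup L_2}$. It does follow from the knottedness of $\beta$, but it needs an argument: for instance, $L_2$ is isotopic in $M_{L_1}$ to an essential curve on $T_1=\pd N(L_1)$, which is incompressible because $\beta$ is nontrivial, so $L_2$ is not null-homotopic in $M_{L_1}$ and hence cannot be contained in a ball $B\subset M_{L_1}$. (The paper's route is different: it intersects a putative reducing sphere with the spanning annulus $R$; an essential curve of intersection yields a compressing disk for $R$, which would force $L_1$ or $L_2$ to be unknotted.)

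Your second step is essentially right in structure but the linking-number bookkeeping is off. Since $\pd\Rhat=L_1\cup L_2$, the algebraic intersection $K\cdot\Rhat=\pm\Delta$ computes $\operatorname{lk}(K,L_1)-\operatorname{lk}(K,L_2)$ (with parallel orientations), not a common value of the two linking numbers; in the paper's parametrization these are $\pm ad$ and $\pm bc$, which in general are neither equal up to sign nor both forced to be nonzero by $\Delta=|ad-bc|\neq 0$. Fortunately all you need is that they are not both zero (which $\Delta\neq 0$ does give), or, more directly, that $K$ cannot lie in a ball $B$ disjoint from $L_1\cup L_2$ because then $\Rhat\cap B$ would be null-homologous in $(B,\pd B)$ and $K\cdot\Rhat$ would vanish. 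The paper instead reads the two linking numbers off the Seifert surface $Q$ constructed in the catching-surface lemma and concludes immediately. With the splitness gap repaired and the linking statement corrected, your layered argument (irreducibility of $M_\beta$, then of $M_{L_1\cup L_2}$, then of $M_{\L}$) is sound and runs parallel in spirit to the paper's brief proof.
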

\begin{proof}
  From the proof of~\autoref*{catching-sfce} we see that the linking
  numbers of $K$ with $L_1$ and $L_2$ are nonzero.  Therefore $M_{\L}$
  is irreducible.

  If $M_{L_1\cup L_2}$ is reducible, a reducing sphere must intersect the annulus $R$ in an essential curve.  However, this shows that $R$ is compressible and therefore $L_1$ or $L_2$ is
  trivial.  Since this is not true, $M_{L_1\cup L_2}$ is irreducible.
\end{proof}

The annulus $R\subset M_{L_1\cup L_2}$ has one boundary component on
each of $T_1$ and $T_2$.  Let $R'=R\cap M_{\L}$.  This is a planar
surface with one longitudinal boundary component on $T_1$, one on
$T_2$, and $|ad-bc|$ coherently oriented meridional boundary
components on $T_K$.

\begin{lemma}\label{no-essential-annulus}
  There is no essential annulus $A\subset M_{\L}$ with one boundary
  component on $T_1$ and the other on $T_2$.
\end{lemma}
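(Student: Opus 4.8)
The plan is to argue by contradiction: suppose $A \subset M_{\L}$ is an essential annulus with one boundary component on $T_1$ and the other on $T_2$. Since $M_{L_1 \cup L_2}$ is irreducible by \autoref*{M-irreducible} and $\boundary$-irreducible (the $L_i$ are knotted, being pushoffs of the nontrivial $\beta$), and $\Rhat$ caps off to an essential annulus $R$ in $M_{L_1 \cup L_2}$, I would first isotope $A$ so that it meets $K$ minimally. The key point is that $A$ and $R$ both connect $T_1$ to $T_2$, so after capping off $N(K)$ we may compare $A$ (possibly after tubing along $K$, or viewing the original unpunctured annulus $\widehat{A}$) with $R$ inside $M_{L_1 \cup L_2}$. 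The slope of $\boundary R$ on $T_i$ is the surface slope of $\beta$ with respect to $\Sigma$, while the slope of $\boundary A$ on $T_i$ is \emph{some} slope; the first subcase is when these slopes agree on $T_1$ (and hence, by a homological/linking argument, on $T_2$ as well).

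If the slopes of $\boundary A$ and $\boundary R$ agree on $T_1$ and $T_2$, then $A \cup R$ (suitably resolved) produces a closed surface or an annulus isotopy that exhibits $R$ and $A$ as isotopic annuli in $M_{L_1 \cup L_2}$; in particular the core $\beta = $ core of $\Rhat$ would be isotopic to the core of $A$. But $A \subset M_{\L} \subset M_{L_1 \cup L_2}$ being an essential annulus with boundary on $T_1 \cup T_2$ forces the $L_i$ to be parallel in an essential way — concretely, $\widehat{A}$ together with the cores gives an isotopy of $\beta$ off of $\Sigma$, and combined with \autoref*{annulus-not-isotopic} this is a contradiction: that lemma says precisely that $\Rhat$ — equivalently $\beta$ with its surface slope — cannot be isotoped into a genus one splitting of $S^3$, which is what an annulus connecting the two pushoffs with the surface slope would provide.

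If instead the slopes differ, I would use the essential annulus $A$ to perform annulus surgery / Dehn filling: filling $T_1$ and $T_2$ along $\boundary A$ yields a manifold in which $A$ caps off to a sphere or compresses, and the fact that $R$ has a different slope means $R$ survives as an essential (annular or planar) surface after this filling, or that the filled manifold is reducible or toroidal in a way incompatible with the hyperbolicity/torus-knot structure of $\beta$ built into the hypotheses (and with $K = \alpha$ having nonzero linking number with the $L_i$, established in the proof of \autoref*{catching-sfce}). Here I would lean on the standard machinery for surfaces in link exteriors — essentially the same $\boundary$-compression and parallelism analysis already used in the proof of \autoref*{arc-annulus-bound} — together with \autoref*{annulus-not-isotopic}.

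The main obstacle I anticipate is the bookkeeping in the second case: controlling how $A$ and $R$ (or $R'$) intersect $T_K$ and each other once the slopes on $T_1, T_2$ differ, and ruling out the possibility that $A$ becomes inessential only after passing to $M_{L_1\cup L_2}$ (i.e., that $A$ is $\boundary$-parallel in the larger manifold but the parallelism hits $K$). Making the reduction to \autoref*{annulus-not-isotopic} clean — verifying that an essential $T_1$–$T_2$ annulus really does yield an isotopy of $\beta$, with the correct surface slope, into a genus one Heegaard surface — is the crux, and everything else is the now-familiar incompressibility and minimal-intersection analysis.
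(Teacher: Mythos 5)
Your case split (slopes of $\pd A$ on $T_1,T_2$ equal to or different from those of $\pd R$) matches the paper's, but both branches have genuine gaps. In the equal-slope case, the contradiction cannot be routed through \autoref*{annulus-not-isotopic}: that lemma handles alternative (1) of \autoref*{BGL-maintheorem} (isotoping $\Rhat$ into the Heegaard surface) and is not used in the proof of this lemma at all, and an essential $T_1$--$T_2$ annulus does not produce an isotopy of $\beta$ into a genus one splitting. Note that $R$ itself is an essential annulus in $M_{L_1\cup L_2}$ joining $T_1$ to $T_2$ with exactly the surface slope, so neither the existence of such an annulus nor its being isotopic to $R$ in $M_{L_1\cup L_2}$ is contradictory. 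The whole point is that $A$ lies in $M_{\L}$, i.e.\ misses $K$, while $\Rhat$ meets $K$ in $\Delta=|ad-bc|>0$ points, all of the same sign. The paper glues $A$ to $R$ along subannuli of $T_1$ and $T_2$ to form a closed (possibly immersed) torus in $S^3$ whose algebraic intersection number with $K$ is $\Delta\neq 0$, contradicting the fact that every closed surface in $S^3$ is null-homologous. Your equal-slope argument never brings $K$ into play, so it cannot close.

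In the unequal-slope case, the Dehn-filling/annulus-surgery sketch is not yet an argument, and the contradiction you are reaching for (reducible or toroidal fillings, hyperbolicity of $\beta$) is not the one that works; $\beta$ may be a torus knot here. The missing step is concrete and elementary: view $A$ as an essential annulus in $M_{L_1\cup L_2}$ chosen to minimize $|A\cap R|$, so that $A\cap R$ is a nonempty collection of arcs essential in both surfaces and coherently oriented (the boundary slopes intersect with a single sign). Cutting $M_{L_1\cup L_2}$ along $N(R)$ gives $M'\cong M_{\beta}$, and the closures of the components of $A\setminus N(R)$ are disks, each meeting each of the two frontier annuli of $N(R)$ exactly once; such a disk has essential boundary in $\pd M'$, contradicting the incompressibility of $\pd M_{\beta}$, i.e.\ the knottedness of $\beta$. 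So the crux is cutting along $R$ and using that $\beta$ is a nontrivial knot, not \autoref*{annulus-not-isotopic} and not any filling argument.
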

\begin{proof}
  Suppose for contradiction that $A$ is such an annulus.  If the
  slopes of $\boundary A$ and $\boundary R$ agree on $T_1$ and $T_2$,
  then we may assume that $\boundary A\cap \boundary R'=\emptyset$
  after an isotopy.  In $S^3$, gluing $A$ to $R$ along subannuli of
  $\pd T_1$ and $\pd T_2$ yields an immersed torus $T$, where $T$
  intersects $\A$ transversely in $|ad-bc| > 0$ points of the same
  orientation.  However, this contradicts that $T$ is homologically
  trivial in $S^3$; hence, there is no such annulus $A$.

  We see that the slopes of $\boundary A$ and $\boundary R$ cannot
  agree.  Note that $A$ is also an essential annulus in $M_{L_1\cup
    L_2}$ with one boundary component on $T_1$ and the other on $T_2$.
  We will show that $M_{L_1\cup L_2}$ contains no such essential
  annuli whose boundary slopes differ from $\boundary R$.
  
  Suppose that it does, and choose an $A \subset M_{L_1\cup L_2}$
  which minimizes $|A \cap R|$.  A cut and paste argument shows that
  $A\cap R$ consists of arcs essential in both $A$ and $R$.  Therefore
  the slopes of $\boundary A$ differ from those of $\boundary R$ on
  both $T_1$ and $T_2$, and these slopes meet each other in points of
  the same intersection sign.  Let $M' = \overline{M_{L_1
      \cup L_2} \setminus N(R)}$, so that $M'$ is homeomorphic to
  $M_{\beta}$, and let $A_1$ and $A_2$ be the two components of the
  frontier of $N(R)$ in $M_{L_1 \cup L_2}$; thus $A_i \subset \pd M'$.
  Since $A \cap R$ is a collection of essential arcs oriented in the
  same direction, $A \cap M' = \overline{A \setminus N(R)}$ is a
  collection of disks, each of which intersects $A_1$ and $A_2$ once.
  This implies that each disk component of $A \cap M'$ has essential
  boundary, contradicting the incompressibility of $\pd M'$.
\end{proof}

\section{Genus one bridge number bounds and proof of the main theorems}\label{section:main}
In this section, we give a lower bound on the genus one bridge numbers
of the knots $K_n$, following which we prove the main theorems of the
paper.  As mentioned above, we will utilize a theorem from~\cite{BakerGordonLuecke13}.
Here we state a version of the theorem specialized to our needs.

\begin{theorem}[\cite{BakerGordonLuecke13}, Theorem 1.2]\label{BGL-maintheorem}
  Let $\L = K\cup L_1\cup
  L_2$ be a link in $S^3$, and let $\Rhat$ be an annulus in $M = S^3$ with
  $\boundary\Rhat=L_1\cup L_2$.  Assume $(\Rhat, K)$ is caught by the
  surface $Q$ in $M_{\L}$ with $\chi(Q)<0$.  Let
  $T_K$, $T_1$, and $T_2$ be the components of $\boundary M_{\L}$
  corresponding to $K$, $L_1$, and $L_2$, respectively.  Suppose that
  $\boundary Q$ is meridional on $T_1$, and $T_2$ and meets a meridian
  of $T_K$ exactly once.  Let $K^n$ be $K$ twisted $n$ times along
  $\Rhat$.  If $H_1\cup_{\Sigma} H_2$ is a genus $g$ Heegaard
  splitting of $S^3$, then either
  \begin{enumerate}
    \item $\Rhat$ can be isotoped to lie in $\Sigma$,
    \item there is an essential annulus properly embedded in $M_{\L}$ with
      one boundary component in each of $T_1$ and $T_2$, or
    \item for each $n$,
      \[ b_g(K^n) \geq \frac{1}{2}\left(
        \frac{n}{-36\chi(Q)}-2g+1\right). \]
  \end{enumerate}
\end{theorem}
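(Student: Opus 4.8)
The plan is to argue by contradiction, following the strategy of the proof of~\autoref{yoav}: realize the twisting as Dehn surgery, place a minimal bridge surface for $K^n$ in good position relative to the two surgery solid tori and to the catching surface $Q$, and play a linear-in-$n$ lower bound on the number of essential intersection arcs against the bilinear-in-complexity upper bound of~\autoref{arc-annulus-bound}. Concretely: since any two genus $g$ Heegaard surfaces of $S^3$ are isotopic, take the surface $\Sigma$ of the statement to underlie a $(g,b)$-bridge splitting of $(S^3,K^n)$ realizing $b=b_g(K^n)$, and suppose for contradiction that conclusions (1) and (2) fail while $2b<\frac{n}{-36\chi(Q)}-2g+1$. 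Recall that $(S^3,K^n)$ is obtained from $M_{L_1\cup L_2}$ by regluing solid tori $V_1,V_2$ along slopes $\gamma_i$ with $\Delta(\gamma_i,\mu_i)=|n|$ — the $\pm1/n$ twisting surgeries of~\cite{BakerGordonLuecke13} — and that the distinguished copy of $K$ becomes $K^n$. Using irreducibility of $M_{\L}$ (\autoref{M-irreducible}), isotope $\Sigma$ so that each $\Sigma\cap V_i$ is a nonempty union of meridian disks of $V_i$; the degenerate case $\Sigma\cap V_i=\emptyset$, in which the bridge surface fails to detect the twisting along $L_i$, is handled separately. Then $P:=\Sigma\cap M_{\L}$ is properly embedded with $2b$ meridional boundary curves on $T_K$, with $k_i\ge1$ coherent parallel copies of $\gamma_i$ on $T_i$, and with $-\chi(P)=2g-2+2b+k_1+k_2$.

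Because a bridge surface compresses to both sides,~\autoref{essential-intersection} does not apply to $P$ directly, so the next step is to trade $\Sigma$ for an auxiliary surface meeting $Q$ essentially. I would invoke the Hayashi--Shimokawa machinery underlying~\autoref{weakly-incompressible}, now applied to the genus $g$ bridge surface $\Sigma$ for $K^n$: either the splitting is stabilized or perturbed, contradicting the minimality of $b=b_g(K^n)$; or it is cancellable or otherwise forces $\Rhat$ — equivalently its core $\beta$ — onto a genus $g$ Heegaard surface of $S^3$, which is conclusion (1); or it yields a strongly irreducible surface $\Sigma'$ meeting $\L$ no more often than $\Sigma$ does and with negative Euler characteristic bounded in terms of that of $\Sigma$. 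In the last case set $P'=\Sigma'\cap M_{\L}$, arranged as before to have boundary of slope $\gamma_i$ on $T_i$; by~\autoref{essential-intersection} applied against the essential catching surface $Q$ we may assume that every arc of $P'\cap Q$ is essential in both surfaces and that $|\partial P'\cap\partial Q|$ is minimal. The arc-counting argument below will either produce an essential annulus with one boundary component on each of $T_1$ and $T_2$ — conclusion (2) — or run into annular, M\"obius-band, or boundary-parallel configurations, all excluded using irreducibility of $M_{\L}$ and the minimality of $|\partial P'\cap\partial Q|$, exactly as in the proof of~\autoref{arc-annulus-bound}.

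For the count: on $T_i$ the curves of $\partial P'$ have slope $\gamma_i$ while $\partial Q$ is meridional, so each component of $\partial P'\cap T_i$ meets each component of $\partial Q\cap T_i$ in $\Delta(\gamma_i,\mu_i)=|n|$ points; hence $|\partial P'\cap\partial Q|\ge|n|(k_1+k_2)$, and since closed components of $P'\cap Q$ do not meet the boundary, $P'\cap Q$ contains at least $\tfrac12|n|(k_1+k_2)$ essential arcs. On the other hand~\autoref{arc-annulus-bound}, applied to $P'$ and a suitably capped-off version $\widehat Q$ of the catching surface (whose negative Euler characteristic is comparable to $-\chi(Q)$), bounds the number of essential arcs above by a fixed multiple of $(-\chi(P'))(-\chi(Q))$, with $-\chi(P')$ linear in $g$, $b$, and $k_1+k_2$. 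Combining the two estimates, dividing by $k_1+k_2$, using $\frac{2g-2+2b+k_1+k_2}{k_1+k_2}\le2g-1+2b$ (valid since $k_1+k_2\ge1$ and $b\ge1$), and carrying the constants through, one obtains $|n|\le-36\chi(Q)(2g-1+2b)$, i.e.\ $2b\ge\frac{n}{-36\chi(Q)}-2g+1$, contradicting the assumption. Since $n$ was arbitrary, conclusion (3) holds for every $n$.

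I expect the crux to be the second step: replacing the (necessarily compressible) bridge surface by an auxiliary surface that meets $Q$ essentially, while keeping control tight enough that $-\chi(P')$ stays linear in $g$ and $b$ and the number of intersection points with $\L$ does not grow, and then cleanly directing every degenerate configuration thrown up by the Hayashi--Shimokawa reduction and the arc-counting argument — closed or boundary-parallel pieces, annuli and M\"obius bands incident to $T_K$, and the case $\Sigma\cap V_i=\emptyset$ — either into a contradiction or into one of conclusions (1) and (2).
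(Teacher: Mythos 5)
First, a point of comparison: the paper does not prove this statement at all --- it is quoted verbatim (in specialized form) from Baker--Gordon--Luecke \cite{BakerGordonLuecke13}, Theorem 1.2, so there is no internal proof to measure your attempt against. Your sketch is essentially an attempt to bootstrap the genus-zero argument of \autoref*{yoav} up to genus $g$, and while that is a natural instinct, the places you yourself flag as ``the crux'' are genuine gaps rather than routine details; the actual BGL proof is a substantial graph-intersection/thin-position argument, not a rerun of the bridge-sphere computation.

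Concretely: (i) \autoref*{weakly-incompressible} is a statement about bridge \emph{spheres}; for a genus $g$ bridge surface the trichotomy you assert (perturbed/stabilized, or cancellable, or replaceable by a strongly irreducible surface meeting $\L$ no more often and with controlled Euler characteristic) is exactly the hard content that would need proof, and the cancellable case only puts the \emph{core} of $\Rhat$ on $\Sigma$, whereas conclusion (1) is about the annulus itself --- bridging that requires the surface-slope bookkeeping that this paper performs separately in \autoref*{annulus-not-isotopic}. (ii) \autoref*{essential-intersection} needs $Q$ to be essential, but the catching hypothesis only constrains $\partial Q$ and $\chi(Q)$; the catching surfaces actually used (punctured Seifert surfaces, \autoref*{catching-sfce}) are not claimed to be incompressible, and making $Q$ essential by boundary-compression would destroy the meridional/coherent boundary conditions you rely on. (iii) \autoref*{arc-annulus-bound} requires the ambient exterior to be irreducible, $\partial$-irreducible and \emph{anannular}; $M_{\L}$ satisfies none of these by hypothesis, and conclusion (2) only absorbs essential annuli with one boundary component on each of $T_1$ and $T_2$ --- annuli with both boundary components on $T_K$, or on a single $T_i$, arise in the cut-and-paste step and are redirected nowhere in your scheme. (iv) Finally, the precise constant $-36\chi(Q)$ and the $-2g+1$ term come out of Euler-characteristic accounting (including the $k_1+k_2$ boundary curves on the $T_i$ and the degenerate case $\Sigma\cap V_i=\emptyset$) that your sketch gestures at but does not carry out. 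So the proposal is a plausible outline of a strategy, but it does not constitute a proof, and its missing steps are not ones the quoted lemmas of this paper can supply.
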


Now we bound the genus one bridge number of the knots constructed in
the previous section.  Recall that $K^n$ is the twisted torus knot
$T(a+ n\Delta c, b + n\Delta d, n\Delta e, f)$, where $\Delta$ is the
intersection number of $\alpha$ and $\beta$ in $\Sigma$.

\begin{prop}\label{lower-bound}
  For each $n$ we have
  \[ b_1(K^n) \geq \frac{1}{2}\left(
      \frac{n}{36(\abs{ad}+\abs{bc}+
                    (\abs{a}-1)(\abs{b}-1)-1)}-1\right) \]
\end{prop}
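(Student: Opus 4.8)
The plan is to apply \autoref{BGL-maintheorem} directly with $g=1$, using the catching surface $Q$ produced in \autoref{catching-sfce} and ruling out the first two alternatives in that theorem by the structural lemmas already established in \autoref{section:construction}. First I would invoke \autoref{catching-sfce} to obtain a surface $Q$ catching $(\Rhat, K)$ with $\chi(Q) = 1 - (\abs{a}-1)(\abs{b}-1) - \abs{bc} - \abs{ad}$, together with the fact that $\boundary Q$ is meridional on $T_1$ and $T_2$ and meets a meridian of $T_K$ exactly once; I would also note $\chi(Q) < 0$, which follows from the standing hypotheses (in particular $\abs{ad - bc} \neq 0$ forces $\abs{ad} + \abs{bc} > 0$, and $1 < e < c < d$ ensures the torus knot $\beta$ is nontrivial, so the configuration genuinely has negative Euler characteristic — this is the only place a tiny bit of care about the degenerate cases $a = \pm 1$ or $b = \pm 1$ is needed, but even then $\abs{ad} + \abs{bc} \geq 2$).

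Next I would dispatch conclusion (1) of \autoref{BGL-maintheorem}: $\Rhat$ cannot be isotoped to lie in a genus one Heegaard surface $\Sigma$ of $S^3$, which is precisely the content of \autoref{annulus-not-isotopic}. Then I would dispatch conclusion (2): there is no essential annulus in $M_{\L}$ with one boundary component on each of $T_1$ and $T_2$, which is exactly \autoref{no-essential-annulus}. (I would also remark that $M_{\L}$ is irreducible by \autoref{M-irreducible}, so that the hypotheses of \autoref{BGL-maintheorem} on the ambient link exterior are in force.) With both (1) and (2) excluded, \autoref{BGL-maintheorem} forces conclusion (3) with $g = 1$:
\[ b_1(K^n) \geq \frac{1}{2}\left( \frac{n}{-36\chi(Q)} - 2(1) + 1 \right) = \frac{1}{2}\left( \frac{n}{-36\chi(Q)} - 1 \right). \]

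Finally I would substitute the value of $\chi(Q)$ from \autoref{catching-sfce}, namely $-\chi(Q) = (\abs{a}-1)(\abs{b}-1) + \abs{bc} + \abs{ad} - 1$, to rewrite the bound as
\[ b_1(K^n) \geq \frac{1}{2}\left( \frac{n}{36(\abs{ad} + \abs{bc} + (\abs{a}-1)(\abs{b}-1) - 1)} - 1 \right), \]
which is the claimed inequality. The main obstacle is not really in this proof at all — it is verifying that the hypotheses of the specialized \autoref{BGL-maintheorem} are met, and essentially all of that work has been front-loaded into \autoref{catching-sfce}, \autoref{annulus-not-isotopic}, \autoref{M-irreducible}, and \autoref{no-essential-annulus}; the proof of \autoref{lower-bound} itself is just the bookkeeping of plugging in $g=1$ and the explicit Euler characteristic, so I expect it to be short.
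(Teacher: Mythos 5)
Your proposal is correct and follows essentially the same route as the paper: the paper's proof likewise applies \autoref{BGL-maintheorem} with $g=1$, citing \autoref{catching-sfce} for the catching surface, \autoref{annulus-not-isotopic} to rule out alternative (1), and \autoref{no-essential-annulus} to rule out alternative (2), then reads off the bound. Your extra remarks (substituting $\chi(Q)$ explicitly, noting irreducibility via \autoref{M-irreducible}, and checking $\chi(Q)<0$) are just slightly more detailed bookkeeping than the paper records.
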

\begin{proof}
  This follows from~\autoref*{BGL-maintheorem}.  The pair $(\Rhat, K)$
  is caught by~\autoref*{catching-sfce}.  The twisting annulus $\Rhat$
  is not isotopic into a genus one Heegaard splitting of $S^3$
  by~\autoref*{annulus-not-isotopic}.  Finally,
  by~\autoref*{no-essential-annulus} there is no essential annulus as
  in case 2.  Therefore, the stated bound holds.
\end{proof}

In order to compute a bound on $b_0(K^n)-b_1(K^n)$, we need an upper
bound on $b_1(K^n)$.  Recall that for large enough $n$, $K^n$ is a $T(p,q,r,s)$ twisted
torus knot with $1<r<p<q$, and consider the Heegaard torus $\Sigma_1$
for $S^3$ depicted in~\autoref*{fig:b_1-bounded-above}.  Here we have
cut the torus along a meridian disk and arranged the parts vertically.
After pulling $p-r$ strands of the pictured twisted torus braid
through $\Sigma_1$ in the first case, or $r$ strands in the second
case, $\Sigma_1$ becomes a bridge surface for $T(p,q,r,s)$.  This
gives the upper bound of the following lemma.

\begin{figure}[h!tb]
  \begin{center}
    \def\svgwidth{1.0\textwidth}
    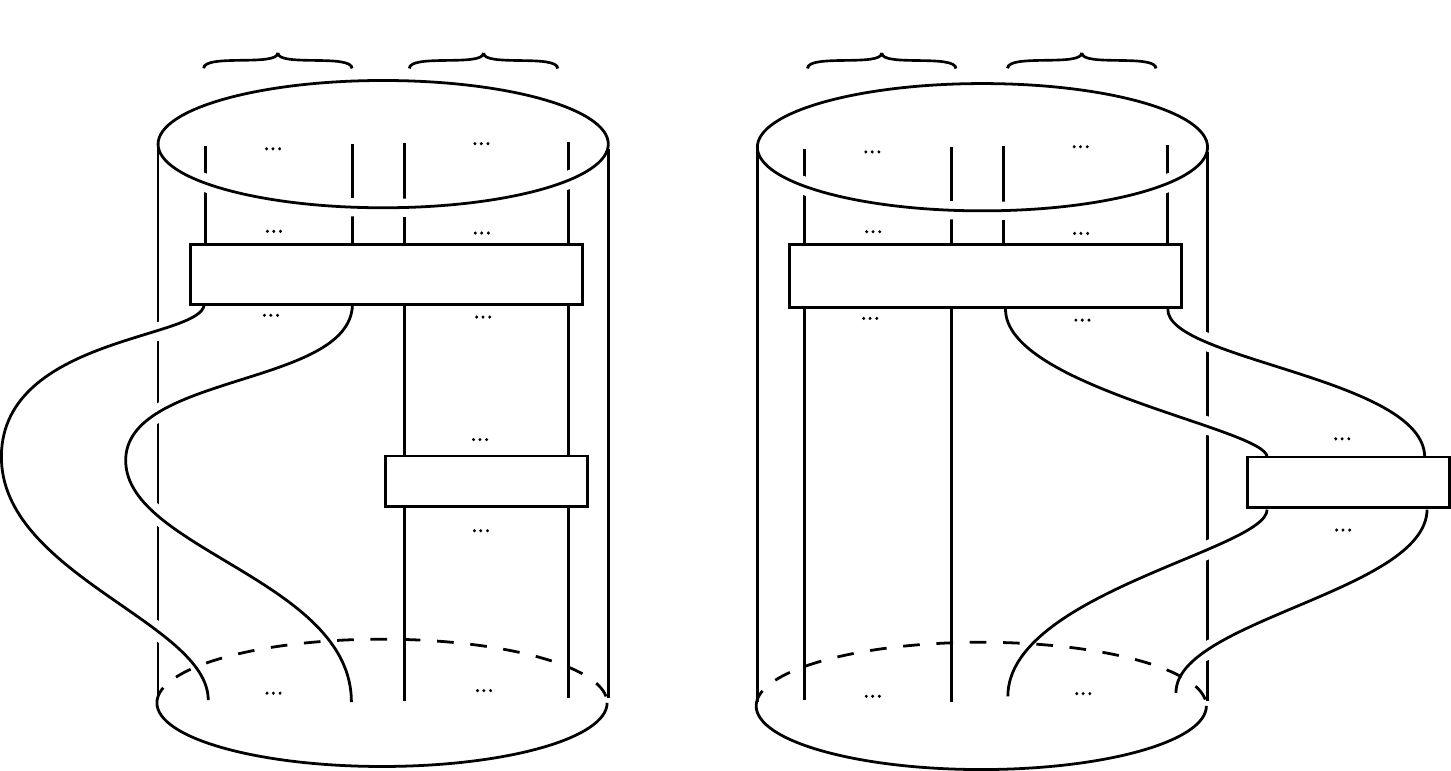
  \end{center}
  \caption{A bridge torus for $T(p,q,r,s)$.  The top box is a $(p,q)$
    torus braid, and the lower box represents $s$ full twists.}
  \label{fig:b_1-bounded-above}
\end{figure}

\begin{lemma}\label{b_1-upper-bound}
  We have
  \[ b_1(T(p,q,r,s)) \leq \min\{r, p-r\}. \]
\end{lemma}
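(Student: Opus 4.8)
The plan is to exhibit two explicit genus one bridge surfaces for $K = T(p,q,r,s)$, one with $r$ bridges and one with $p-r$ bridges; since $b_1(K)$ is at most each of these, the stated bound follows.

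Both bridge surfaces arise from destabilizing the genus two Heegaard surface $\Sigma$ on which $K$ naturally lies (\autoref{fig:ttk-genus2}). Recall from the discussion preceding \autoref{sfce-slope} that the handlebody inside $\Sigma$ carries a disk system $\{E_1, E_2\}$ with $|K \cap \partial E_1| = |p|$ and $|K \cap \partial E_2| = |r|$, where $E_2$ is the cocore of the twisting $1$-handle. Since $S^3$ has a unique genus one Heegaard splitting and $\Sigma$ is a stabilization of it, $\partial E_2$ is nonseparating in $\Sigma$ and compressing $\Sigma$ along $E_2$ produces a Heegaard torus $\Sigma_1$ for $S^3$ (this is the destabilization recorded in the right-hand picture of \autoref{fig:b_1-bounded-above}, where $r$ strands of the twisted torus braid have been pulled through $\Sigma_1$). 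I would then check that this compression places $K$ in $(1,r)$-bridge position: the $r$ subarcs of $K$ running over the twisting handle become $r$ trivial arcs on one side of $\Sigma_1$ — the $s$ full twists affect only the knot type and not the triviality of these arcs — while the rest of $K$, which lay in a collar of $\Sigma_1$, gives $r$ trivial arcs after a small push to the other side. Hence $b_1(K) \le r$.

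For $b_1(K) \le p - r$ I would repeat the argument with $E_2$ replaced by the band sum $E' = E_1 \#_b E_2$, choosing the band $b$ to run alongside a subarc of $K$ meeting all $r$ intersection points of $K$ with $\partial E_2$ and $r$ of the $p$ intersection points of $K$ with $\partial E_1$; this is possible precisely because $r < p$. The intersection points at the two ends of the band cancel in pairs, so $|K \cap \partial E'| = (p + r) - 2r = p - r$, and $\partial E'$ still meets the complementary outside disk once, so compressing $\Sigma$ along $E'$ again yields a Heegaard torus — this time the left-hand picture of \autoref{fig:b_1-bounded-above}, with $p - r$ strands pulled through. As before the resulting arcs are trivial on both sides, giving $b_1(K) \le p - r$ and completing the proof.

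I expect the point needing the most care to be the geometric realization in the second case: one must verify that the standard position of $K$ on $\Sigma$ really allows the band $b$ to be chosen so that $r$ of the $p$ intersections with $\partial E_1$ cancel against the $r$ intersections with $\partial E_2$, equivalently that $p - r$ strands of the braid can be pulled through $\Sigma_1$ as in \autoref{fig:b_1-bounded-above}. The remaining verifications — that each compression genuinely produces a bridge surface, i.e. that the arcs on each side are simultaneously $\partial$-parallel — are routine and transparent from the figures.
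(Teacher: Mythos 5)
Your proposal is correct and is essentially the paper's own argument: the two destabilizations you describe (compressing along the cocore $E_2$ of the twisting handle, and along the band sum $E_1\#_b E_2$ meeting $K$ in $p-r$ points) are precisely the moves recorded in \autoref*{fig:b_1-bounded-above}, where $r$ or $p-r$ strands of the twisted torus braid are pulled through the Heegaard torus to exhibit $(1,r)$- and $(1,p-r)$-positions. One small caveat: compressing a stabilized splitting along an arbitrary nonseparating meridian disk need not yield a Heegaard torus (uniqueness of the genus one splitting alone does not give this), but your argument is fine because $E_2$ is the cocore of the visible handle and, as you note for $E'$, the boundary of each compressing disk is dual to a single outside disk.
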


We need the next lemma, which is implicit in the work of
Dean~\cite{Dean03} and appears explicitly in Lemma 3.2 of~\cite{Lee13},
to prove that the knots in~\autoref*{thm:hyperbolic-gaps} may be taken
to have tunnel number one.  Later, we will use it to give examples of
Berge knots with arbitrarily large genus one bridge number.  Let the
genus $2$ splitting $\Sigma$ bound handlebodies $H_1$ and $H_2$ in
$S^3$, and think of $H_1$ as the ``inside'' handlebody
in~\autoref*{fig:alpha-beta}.

\begin{lemma}\label{ttk-primitive}
  The knot $T(p,q,r,s)$ (with notation as above and $p,q>1$) is
  primitive in $H_1$ if and only if $r\equiv \pm 1\pmod{p}$ or $r\equiv \pm q
  \pmod{p}$.  This knot is primitive in $H_2$ if and only if
  \begin{enumerate}
  \item $s=\pm 1$, and
  \item $r\equiv \pm1 \pmod{q}$ or $r\equiv \pm p\pmod{q}$.
  \end{enumerate}
\end{lemma}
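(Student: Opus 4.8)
The plan is to analyze the knot $T(p,q,r,s)$ directly on the genus two Heegaard surface $\Sigma$ and understand, for each side, when the knot represents a generator of a free basis of the handlebody group. Recall that an element $w$ of $\pi_1$ of a genus two handlebody $H$ is \emph{primitive} precisely when it forms part of a free generating pair, equivalently when $H$ together with a $2$-handle attached along $w$ yields a solid torus (so that surgery on the knot along the surface slope can give a lens space). So the real content is a word computation in the free group $\pi_1(H_i) = \langle x, y\rangle$.

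First I would read the knot $T(p,q,r,s)$ off of Figure \ref{fig:ttk-genus2}: it lies on $\Sigma$, runs $p$ times through one handle-disk and $r$ times through the other on the inside handlebody $H_1$, and $q$ times and $rs$ times through the corresponding disks of $H_2$. Choosing the free basis $\{x,y\}$ of $\pi_1(H_1)$ dual to the obvious meridian disks, the knot spells out a word of the form $w_1 = x^{\epsilon_1} y^{\epsilon_2} \cdots$ recording the cyclic sequence in which the strands of the $(p,q)$-torus braid (with $r$ strands twisted) pass over the two disks; concretely this is the standard ``torus knot word'' $x^{?}y^{?}\cdots$ interleaved in the pattern dictated by the $(p,q)$-torus braid on the $r$-strand cable. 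The classical fact (Dean; see also Lee) is that such a word is primitive in $\langle x,y\rangle$ exactly when the exponent data satisfy a congruence condition. I would reduce the word to the cyclically reduced form, observe that the exponent sum on $x$ is $\pm p$ and on $y$ is $\pm r$ (up to orientation), and then apply the combinatorial criterion for primitivity of two-generator words of this ``fractional linear'' shape: $w_1$ is primitive iff $r \equiv \pm 1 \pmod p$ or $r \equiv \pm q \pmod p$. The cleanest way to get the congruence is to note that the knot on the inside is itself a torus knot on the torus $\partial H_1'$ for a subhandlebody, of type depending on $r \bmod p$ and $q \bmod p$, and a torus knot $T(a,b)$ on a Heegaard torus is primitive in the solid torus iff $a \equiv \pm 1$ or (after the obvious change) the relevant parameter is $\pm 1$; unwinding which parameter is forced by the braid gives the two stated alternatives.

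For $H_2$ the same analysis applies but now the two relevant multiplicities are $q$ and $rs$: the word $w_2 \in \pi_1(H_2)$ has exponent sums $\pm q$ and $\pm rs$. Primitivity forces, first, that $rs \equiv \pm r \pmod{q}$ has a chance of reducing the word to a generator, but more precisely one finds the braid word is primitive only when the ``$s$ full twists'' box contributes a shift by $\pm 1$ strand in the relevant sense — this is exactly the condition $s = \pm 1$ — and then, with $s=\pm1$, the inside-type congruence analysis with $q$ in place of $p$ gives $r \equiv \pm 1 \pmod q$ or $r \equiv \pm p \pmod q$. I would isolate the necessity of $s=\pm1$ by a linking-number or homology argument inside the solid torus obtained from $H_2$ after the $2$-handle attachment: the core of that solid torus is hit by $K$ a number of times governed by $s$, and it can be $\pm 1$ (forcing primitivity to even be possible) only when $|s|=1$.

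The main obstacle will be bookkeeping: writing down the correct cyclic word for $T(p,q,r,s)$ on each side of $\Sigma$ and verifying the primitivity congruence rigorously rather than by picture. Rather than grinding through this, I would lean on the fact that it is \emph{already done} in the literature — the statement is quoted as Lemma 3.2 of \cite{Lee13} and is implicit in \cite{Dean03} — so the ``proof'' can legitimately be a short reduction to those references, pointing out that our $H_1$ is Lee's/Dean's inside handlebody and that the surface-slope bookkeeping (the $rs$ versus $r$ discrepancy between the two sides) is exactly what produces the extra condition $s=\pm1$ on the $H_2$ side. If a self-contained argument is wanted, the cleanest route is the torus-knot-in-a-solid-torus reinterpretation sketched above, where primitivity becomes the elementary statement that $T(a,b)$ drawn on $\partial(\text{solid torus})$ is primitive iff $a\equiv\pm1\pmod b$.
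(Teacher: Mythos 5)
The paper itself offers no proof of this lemma: it is stated without a proof environment and attributed exactly as you propose, as Lemma 3.2 of \cite{Lee13} and implicit in \cite{Dean03}. So your primary route --- reducing to those references, after identifying $H_1$, $H_2$ with the inside and outside handlebodies and noting that the intersection data are $(p,r)$ on one side and $(q,rs)$ on the other --- coincides with what the authors actually do.

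One caution about your optional self-contained sketch, in case you intend to flesh it out: the step isolating the necessity of $s=\pm 1$ by a ``linking-number or homology argument'' does not work. Abelianizing, primitivity of the word $w_2\in\pi_1(H_2)\cong F_2$ only forces its image in $\mathbb{Z}^2$, namely $(\pm q,\pm rs)$ up to sign conventions, to be a primitive vector, i.e.\ $\gcd(q,rs)=1$; this is perfectly compatible with $|s|>1$ (take any $s$ coprime to $q$ with $r$ coprime to $q$), so no homological count of intersections with the core can rule out $|s|>1$. The necessity of $s=\pm1$ is genuinely a free-group (non-abelian) statement and is where the real content of Lee's proof lies. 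Similarly, your closing slogan that a $T(a,b)$ curve on the boundary of a solid torus is primitive iff $a\equiv\pm1\pmod b$ is not right as stated: in $\pi_1$ of the solid torus the curve represents $a$ times the generator, which is primitive iff $a=\pm1$ exactly; the mod-$p$ and mod-$q$ congruences in the lemma arise only after the more delicate analysis of the word spelled on the genus two surface, not from a rank-one computation. None of this affects your main plan, which, like the paper, defers to \cite{Lee13} and \cite{Dean03}.
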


We can now
prove~\autoref*{thm:hyperbolic-gaps}.

\begin{proof}[Proof of~\autoref*{thm:hyperbolic-gaps}]
  Choose curves $\alpha$ and $\beta$ as
  in~\autoref*{section:construction} and let $K^n=T(p,q,r,s)$ be the
  knot defined by twisting $\alpha$ along $\beta$ in the genus 2
  splitting $\Sigma$.  Given $C>0$, we may choose $n$ large enough so
  that~\autoref*{parameter-inequalities} is satisfied and
  $b_1(K^n)\geq C$ by~\autoref*{lower-bound}.  Fix this $n$ (so that
  $p$, $q$, and $r$ are also fixed) and note that $s$ does not appear
  in the bound for $b_1(K^n)$ given by~\autoref*{lower-bound}.
  By~\autoref*{hyperbolic-twists} and~\autoref*{yoav} the knots
  $K_s=T(p,q,r,s)$ are hyperbolic and have $b_0(K_s) = p$ for
  $|s|>18p$.  Furthermore, $b_1(K_s)\leq \min(r, p-r) \leq 
  \frac{1}{2}p$ by~\autoref*{b_1-upper-bound}.

  To see that we may choose such examples to have tunnel number one,
  note that we may choose $\alpha$ and $\beta$ so that the knots $K^n$
  are primitive in $H_1$ according to~\autoref*{ttk-primitive}.  For
  example, choosing $\alpha$ to be $T(1, 1, 0, 0)$ and $\beta$ to be
  $T(m, m+1, 1, s)$ for $m>1$ results in the family $K^n=T(mn+1,
  mn+n+1, n, s)$ (cf. proof of~\autoref*{thm:hyperbolic-berge}).  A
  knot which is primitive on one side of $\Sigma$ is isotopic to a
  core of the handlebody on that side, and so has tunnel number one.
\end{proof}

Now we examine special cases of twisted torus knots. 

\begin{proof}[Proof of~\autoref*{thm:hyperbolic-berge}]
  Recall the curves $\alpha$ and $\beta$ of~\autoref*{fig:alpha-beta}.
  Twisting $n$ times around $\beta$ we obtain the knots $T(3n+ 1, 2n+
  1, n, 1)$, which are Berge knots by~\autoref*{ttk-primitive}.

  More generally, let $\alpha$ be a $T(1, 1, 0, 0)$ twisted torus knot
  and $\beta$ be a $T(m, m+1, 1, \pm 1)$ twisted torus knot for $m>1$.
  Since $\Delta = 1$, we obtain the knot $K^n=T(mn + 1, mn+n+1, n, \pm
  1)$.  Letting $p=mn + 1$, $q=mn+n+1$, $r=n$, and $s=\pm 1$, an easy
  calculation shows that $K^n$ is primitive on both sides of $\Sigma$,
  so these are Berge knots.

  Applying the bound of~\autoref*{lower-bound} to these knots, we see
  that they have arbitrarily large genus one bridge number.  We use the
  argument of Proposition 4.3 from~\cite{MoriahSedgwick09} to show that
  these knots are hyperbolic: Because $K^n$ is primitive on at least
  one side of $\Sigma$, it is a tunnel number one knot. Toroidal
  tunnel one knots are classified by Morimoto and
  Sakuma~\cite{MorimotoSakuma91} and have genus one bridge number one.
  Therefore, for large enough $n$, $K^n$ is atoroidal and not a torus
  knot; thus it is hyperbolic.

  To see that these are Berge knots of type VII and VIII (knots which
  lie in the fiber of a trefoil or figure eight knot,
  see~\cite{Baker08}), we note that $K^n$ lies in a neighborhood of
  $\alpha\cup\beta$ in $\Sigma$ and examine the boundary of this
  neighborhood in $S^3$.  Introduce an unknotted curve $c$ with
  surgery coefficient $\mp 1$ according to whether $s=\pm 1$ as
  in~\autoref*{fig:Berge-types}.  Before surgery on $c$, we see
  $N(\alpha\cup\beta)$ as the neighborhood of a $a=(1, 1)$ curve and a
  $b_m = (m, m+1)$ curve on a genus one splitting $T$ of $S^3$.  It is
  not difficult to see that $\pd N(a\cup b_1)$ becomes the figure 8 knot
  after $+1$ surgery on $c$ and a trefoil knot after $-1$ surgery.
  But $N(a\cup b_m)$ is isotopic to $N(a\cup b_1)$ in $T$ since we can
  ``untwist'' along $a$.  Therefore $N(a\cup b_m)$ is a genus one Seifert surface for $\pd N(a \cup b_m)$, which becomes a trefoil or figure eight after the appropriate surgery on $c$, and
  so $K^n$ is a Berge knot of type VII or VIII.
\end{proof}

\begin{figure}[h!tb]
  \begin{center}
    \def\svgwidth{0.4\textwidth}
    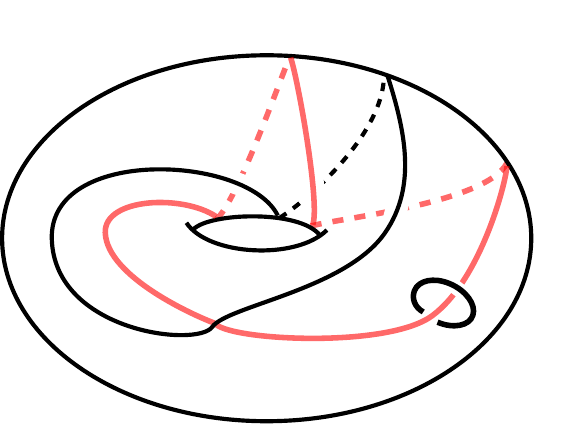
  \end{center}
  \caption{The curves $a$ and $b_1$ lying on a genus 1 splitting
    of $S^3$, together with the twisting curve $c$}
  \label{fig:Berge-types}
\end{figure}

Next, we consider the knots studied by Dean in~\cite{Dean03}. These are
knots which lie in a genus two splitting of $S^3$ so that they are
primitive on one side and Seifert fibered on the other (in the sense that attaching
a $2$--handle to the handlebody along the knot results in a Seifert
fibered space).  Such
knots have small Seifert fibered surgeries, so in this sense Dean
knots are a generalization of Berge knots.  Dean shows that the knots $T(p, q, 2q-p, \pm 1)$, $(p+1)/2<q<p$, and
$T(p, q, p-kq, \pm 1)$, $1<q<p/2$, $2\leq k\leq (p-2)/q$ are
primitive/Seifert fibered \cite[Theorem
4.1]{Dean03}.  

\begin{proof}[Proof of~\autoref*{thm:hyperbolic-dean}]
  Let $\alpha$ be $T(2, 1, 0, 0)$ and $\beta$ be $T(2m-1, m, 1, \pm
  1)$.  Twisting $\alpha$ around $\beta$ yields $T(2mn -n + 2, mn +1,
  n, \pm 1)$.  With $p=2mn-n+2$, $q=mn+1$, $r=n$, and $s=\pm 1$, we
  see that $r = 2q-p$ and $\frac{p+1}{2}<q<p$ for large enough $n$.
  These are Dean knots of the first type in~\cite[Theorem 4.1]{Dean03}.

  Similarly, if we let $\alpha$ be $T(l, 1, 0, 0)$ and $\beta$ be
  $T(lm+1, m, 1, \pm 1)$ for $l\geq 2$, $m \geq 2$ we obtain the knot
  $T(p,q,r,s)$ with $p=(lm+1)n+l$, $q=mn+1$, $r=n$, and $s=\pm 1$
  after twisting. It is clear that $r=p-lq$, $1<q<p/2$, and $2\leq
  l\leq\frac{p-2}{q}$ for large enough $n$, and so these are Dean
  knots of the second type in~\cite[Theorem 4.1]{Dean03}.

  The same arguments used in the proof
  of~\autoref*{thm:hyperbolic-berge} apply to show that these knots
  are hyperbolic and have arbitrarily large genus one bridge number.
\end{proof}

Finally, we show that the knots from~\autoref*{thm:hyperbolic-berge} have minimal genus Heegaard splittings of Hempel distance two.

\begin{proof}[Proof of~\autoref*{cor:distance-two}]
  As in the proof of~\autoref*{thm:hyperbolic-berge}, for sufficiently
  large $n$ the knots $K^n = T(mn+1,mn+n+1,n,\pm 1)$ are hyperbolic
  Berge knots with unbounded genus one bridge number.  Since each
  $K^n$ has a doubly primitive representative on a genus two Heegaard
  surface $\Sigma'$ for $S^3$, there are compressing disks $D$ and
  $D'$ on opposite sides of $\Sigma'$ which are disjoint from $K^n$.
  Pushing $K^n$ off of $\Sigma'$ into one of the handlebodies yields a
  genus two Heegaard surface $\Sigma$ for $M_{K^n}$, where $D$ and
  $D'$ are compressing disks for $\Sigma$ and the distance between
  $\boundary D$ and $\boundary D'$ in the curve complex of $\Sigma$ is
  at most 2.  On the other hand, the distance of $\Sigma$ cannot be
  zero or one by~\cite{CassonGordon87} because $K^n$ is hyperbolic.
  Therefore, the distance of $\Sigma$ is exactly two.
\end{proof}









\bibliographystyle{plain}
\bibliography{spectrum}

\begin{thebibliography}{10}

\bibitem{BakerCommunication}
Ken Baker.
\newblock personal communication.

\bibitem{BakerGordonLuecke13}
Ken Baker, Cameron Gordon, and John Luecke.
\newblock Bridge number and integral {D}ehn surgery, 2013.
\newblock arXiv:1303.7018.

\bibitem{Baker08}
Kenneth~L. Baker.
\newblock Surgery descriptions and volumes of {B}erge knots. {I}. {L}arge
  volume {B}erge knots.
\newblock {\em J. Knot Theory Ramifications}, 17(9):1077--1097, 2008.

\bibitem{Berge90}
John Berge.
\newblock Some knots with surgeries yielding lens spaces, c.\ 1990.
\newblock Unpublished manuscript.

\bibitem{BlairEtAl13}
Ryan Blair, Marion Campisi, Jesse Johnson, Scott~A. Taylor, and Maggy Tomova.
\newblock Exceptional and cosmetic surgeries on knots, 2013.
\newblock arXiv:1209.0197.

\bibitem{CassonGordon87}
Andrew~J. Casson and Cameron~McA. Gordon.
\newblock Reducing {H}eegaard splittings.
\newblock {\em Topology Appl.}, 27(3):275--283, 1987.

\bibitem{FuterEtAl11}
Abhijit Champanerkar, David Futer, Ilya Kofman, Walter Neumann, and Jessica~S.
  Purcell.
\newblock Volume bounds for generalized twisted torus links.
\newblock {\em Math. Res. Lett.}, 18(6):1097--1120, 2011.

\bibitem{Dean03}
John~C. Dean.
\newblock Small {S}eifert-fibered {D}ehn surgery on hyperbolic knots.
\newblock {\em Algebr. Geom. Topol.}, 3:435--472, 2003.

\bibitem{Doll92}
Helmut Doll.
\newblock A generalized bridge number for links in {$3$}-manifolds.
\newblock {\em Math. Ann.}, 294(4):701--717, 1992.

\bibitem{GordonLitherland84}
C.~McA. Gordon and R.~A. Litherland.
\newblock Incompressible planar surfaces in {$3$}-manifolds.
\newblock {\em Topology Appl.}, 18(2-3):121--144, 1984.

\bibitem{Gordon05}
Cameron~McA. Gordon.
\newblock Problems.
\newblock In {\em Workshop on {H}eegaard {S}plittings}, volume~12 of {\em Geom.
  Topol. Monogr.}, pages 401--411. Geom. Topol. Publ., Coventry, 2007.

\bibitem{Guntel12}
Brandy~J. Guntel.
\newblock Knots with distinct primitive/primitive and primitive/{S}eifert
  representatives.
\newblock {\em J. Knot Theory Ramifications}, 21(1):1250015, 12, 2012.

\bibitem{HayashiShimokawa01}
Chuchiro Hayashi and Koya Shimokawa.
\newblock Thin position of a pair (3-manifold, 1-submanifold).
\newblock {\em Pacific J. Math.}, 197(2):301--324, 2001.

\bibitem{JohnsonCommunication}
Jesse Johnson.
\newblock personal communication.

\bibitem{JohnsonLDT}
Jesse Johnson.
\newblock The bridge spectrum.
\newblock
  \url{http://ldtopology.wordpress.com/2013/02/16/the-bridge-spectrum/},
  February 2013.

\bibitem{Kirby10}
Robion Kirby.
\newblock Problems in low-dimensional topology.
\newblock \url{http://math.berkeley.edu/~kirby/problems.ps.gz}, 2010.

\bibitem{Lee07}
Sangyop Lee.
\newblock Exceptional dehn fillings on hyperbolic 3-manifolds with at least two
  boundary components.
\newblock {\em Topology}, 46(5):437--468, 2007.

\bibitem{Lee12}
Sangyop Lee.
\newblock Twisted torus knots {$T(p,q;kq,s)$} are cable knots.
\newblock {\em J. Knot Theory Ramifications}, 21(1):1250005, 4, 2012.

\bibitem{Lee13}
Sangyop Lee.
\newblock Twisted torus knots that are unknotted.
\newblock {\em International Math. Research Notices}, 2013.

\bibitem{MoriahSedgwick09}
Yoav Moriah and Eric Sedgwick.
\newblock Heegaard splittings of twisted torus knots.
\newblock {\em Topology Appl.}, 156(5):885--896, 2009.

\bibitem{Morimoto12}
Kanji Morimoto.
\newblock On composite tunnel number one links.
\newblock {\em Topology Appl.}, 59(1):59--71, 1994.

\bibitem{MorimotoSakuma91}
Kanji Morimoto and Makoto Sakuma.
\newblock On unknotting tunnels for knots.
\newblock {\em Math. Ann.}, 289(1):143--167, 1991.

\bibitem{MorimotoSakumaYokota96}
Kanji Morimoto, Makoto Sakuma, and Yoshiyuki Yokota.
\newblock Examples of tunnel number one knots which have the property
  ``{$1+1=3$}''.
\newblock {\em Math. Proc. Cambridge Philos. Soc.}, 119(1):113--118, 1996.

\bibitem{MorimotoYamada09}
Kanji Morimoto and Yuichi Yamada.
\newblock A note on essential tori in the exteriors of torus knots with twists.
\newblock {\em Kobe J. Math.}, 26(1-2):29--34, 2009.

\bibitem{Rieck00}
Yo'av Rieck.
\newblock Heegaard structures of manifolds in the {D}ehn filling space.
\newblock {\em Topology}, 39(3):619--641, 2000.

\bibitem{ScharlemannTomova08}
Martin Scharlemann and Maggy Tomova.
\newblock Uniqueness of bridge surfaces for 2-bridge knots.
\newblock {\em Math. Proc. Cambridge Philos. Soc.}, 144(3):639--650, 2008.

\bibitem{Schubert54}
Horst Schubert.
\newblock Über eine numerische knoteninvariante.
\newblock {\em Mathematische Zeitschrift}, 61(1):245--288, 1954.

\bibitem{Schultens07}
Jennifer Schultens.
\newblock Bridge numbers of torus knots.
\newblock {\em Math. Proc. Cambridge Philos. Soc.}, 143(3):621--625, 2007.

\bibitem{Torisu96}
Ichiro Torisu.
\newblock Boundary slopes for knots.
\newblock {\em Osaka J. Math.}, 33(1):47--55, 1996.

\bibitem{Zupan13}
Alexander Zupan.
\newblock Bridge spectra of iterated torus knots, 2013.
\newblock arXiv:1301.7689.

\end{thebibliography}
\end{document}